\renewcommand{\Im}{\operatorname{Im}}
\renewcommand{\Re}{\operatorname{Re}}
\renewcommand{\Im}{\operatorname{Im}}
\renewcommand{\(}{\left\(}
\renewcommand{\)}{\right\)}
\renewcommand{\[}{\left\[}
\renewcommand{\]}{\right\]}
\numberwithin{equation}{section}
 \theoremstyle{plain}
\newtheorem{theorem}{Theorem}[section]
\newtheorem{lemma}[theorem]{Lemma}
\newtheorem{remark}[]{Remark}
\newtheorem{conjecture}[theorem]{Conjecture}
\def\proof{\@ifnextchar[{\@oproof}{\@nproof}}
\def\@oproof[#1][#2]{\trivlist\item[\hskip\labelsep\textit{#2 Proof of\
#1.}~]\ignorespaces}
\def\@nproof{\trivlist\item[\hskip\labelsep\textit{Proof.}~]\ignorespaces}
\begin{document}
\title[Equivalent criteria for the Generalized Riemann hypothesis]{Hardy-Littlewood-Riesz type equivalent criteria for the Generalized Riemann hypothesis}

\author{Meghali Garg}
\address{Meghali Garg \\ Department of Mathematics \\
Indian Institute of Technology Indore \\
Simrol,  Indore,  Madhya Pradesh 453552, India.} 
\email{meghaligarg.2216@gmail.com,   phd2001241005@iiti.ac.in}

 \author{Bibekananda Maji}
\address{Bibekananda Maji\\ Department of Mathematics \\
Indian Institute of Technology Indore \\
Simrol,  Indore,  Madhya Pradesh 453552, India.} 
\email{bibek10iitb@gmail.com,  bibekanandamaji@iiti.ac.in}

\thanks{2010 \textit{Mathematics Subject Classification.} Primary 11M06; Secondary 11M26 .\\
\textit{Keywords and phrases.} Riemann zeta function,  Dirichlet $L$-function,  Non-trivial zeros, Generalized Riemann hypothesis,  Equivalent criteria. }

\maketitle

\begin{abstract}
In the present paper,  we prove that the generalized Riemann hypothesis for the Dirichlet $L$-function $L(s,\chi)$ is equivalent to the following bound: Let $k \geq 1$ and $\ell$ be positive real numbers.  For any $\epsilon >0$,  we have
\begin{align*}
\sum_{n=1}^{\infty} \frac{\chi(n) \mu(n)}{n^{k}} \exp \left(- \frac{ x}{n^{\ell}}\right) = O_{\epsilon,k,\ell} \bigg(x^{-\frac{k}{\ell}+\frac{1}{2 \ell} + \epsilon }\bigg), \quad \mathrm{as}\,\, x \rightarrow \infty,
\end{align*}
where $\chi$ is a primitive Dirichlet character modulo $q$,  and  $\mu(n)$ denotes the M\"{o}bius function.  This bound generalizes the previous bounds given by Riesz,  and Hardy-Littlewood. 
\end{abstract}

\section{introduction}

The inception of the theory of the Riemann zeta function $\zeta(s)$,  is due to the German mathematician Bernhard Riemann in 1859.  This theory has played a crucial  role in development of the analytic number theory.  In his seminal paper \cite{Rie59},  Riemann showed that $\zeta(s)$ satisfies a beautiful symmetric functional equation and it can be analytically continued to the whole complex plane except at $s=1$.  In the same paper,  he conjectured that all the non trivial zeroes of $\zeta(s)$ lie on the critical line $\Re(s) = \frac{1}{2}$.  This conjecture is popularly known as the Riemann hypothesis and has been unsolved for the last 162 years.  Over the years,  mathematicians gave many useful equivalent criteria for the Riemann hypothesis while trying to prove it.  One of the notable equivalent criteria is due to Littlewood \cite[p.~370]{Tit},  which states that the Riemann hypothesis is equivalent to the following bound: 
for any $\epsilon >0$, 
\begin{align}\label{summatory_Mobius}
\sum_{1 \leq n \leq x} \mu(n) = O_{\epsilon}\left(x^{\frac{1}{2}+\epsilon}\right),  \quad \textrm{as} \,\,  x \rightarrow \infty,
\end{align}
where $\mu(n)$ denotes the M\"{o}bius function.  In 1916,  Riesz \cite{Riesz} found that the Riemann hypothesis is equivalent to the following bound for an infinite series associated to $\mu(n)$,  
\begin{equation}\label{Riesz}
P_2(x):=  \sum_{n=1}^\infty \frac{\mu(n)}{n^2} \exp\left( - \frac{x}{n^2} \right) = O_{\epsilon} \left( x^{-\frac{3}{4} + \epsilon} \right), \quad {\rm as}\,\,  x \rightarrow \infty,
\end{equation}
for any positive $\epsilon$.  Inspired from the work of Riesz,  Hardy and Littlewood \cite[p.~161]{HL-1916}  established another equivalent criterion,  namely,  the Riemann hypothesis is equivalent to the fact that
\begin{align}\label{Riesz type_Hardy_Littlewood}
P_1(x):= \sum_{n=1}^\infty  \frac{\mu(n)}{n} \exp\left({-\frac{x}{n^2}}\right) = O_{\epsilon}\left( x^{-\frac{1}{4}+ \epsilon } \right), \quad \mathrm{as}\,\, x \rightarrow \infty.  
\end{align} 
Hardy and Littlewood \cite[p.~156, Section 2.5]{HL-1916} obtained the bound \eqref{Riesz type_Hardy_Littlewood} while correcting   an  identity from the second notebook of Ramanujan  \cite[p.~312]{Rama_2nd_Notebook},  \cite[Equation (37.3), p.~470]{BCB-V}.  
Mainly,  they showed that,  for any positive real number $x$,  we have 
 \begin{align}\label{Hardy-Littlewood}
 \sum_{n=1}^{\infty} \frac{\mu(n)}{n} \exp\left({-\frac{x}{n^2}}\right) = \sqrt{\frac{\pi}{x}} \sum_{n=1}^{\infty} \frac{\mu(n)}{n} \exp\left(- \frac{\pi^2}{n^2 x} \right)- \frac{1}{2 \sqrt{\pi}} \sum_{\rho} \left(  \frac{\pi}{\sqrt{x}} \right)^{\rho}  \frac{ \Gamma\left(\frac{1-\rho}{2} \right) }{\zeta'(\rho)},
 \end{align}
 where the right hand side sum over $\rho$ runs through the non-trivial zeros of  $\zeta(s)$, which are assumed to be simple.  Substituting $\alpha \beta = \pi$ and  $x$ by $\alpha^2$,  the above identity takes the following shape:
\begin{align}\label{Rama_Hardy_Little}
\sqrt{\alpha} \sum_{n=1}^{\infty} \frac{\mu(n)}{n} \exp\left({-\left(\frac{\alpha}{n}\right)^2}\right) - \sqrt{\beta} \sum_{n=1}^{\infty} \frac{\mu(n)}{n} \exp\left({-\left(\frac{\beta}{n}\right)^2}\right) = -\frac{1}{2\sqrt{\beta}} \sum_{\rho} \frac{ \Gamma\left(\frac{1-\rho}{2} \right) \beta^\rho}{\zeta'(\rho)}.
\end{align}
The convergence of the right hand side infinite series over $\rho$ is quite intricate.  
Hardy and Littlewood \cite[p.~159, Section 2.5]{HL-1916}   and later Titchmarsh \cite[p.~220]{Tit} explained the convergence of this series assuming bracketing conditions of the ordinates of the non-trivial zeros of $\zeta(s)$.  This means, if $\rho_1$ and $\rho_2$ are two non-trivial zeros of $\zeta(s)$ such that for some positive constant $C$, 
\begin{align}\label{bracketing}
|\Im(\rho_1) - \Im(\rho_2)| < \exp \left( -\frac{C \Im(\rho_1)}{\log(\Im(\rho_1) )} \right) + \exp \left( -\frac{C \Im(\rho_2)}{\log(\Im(\rho_2) )} \right),
\end{align}
then the terms corresponding to these zeros will be considered in the same bracket.  Moreover,  Hardy and Littlewood also pointed out that it is highly possible that the infinite series over $\rho$ will be rapidly convergent but they were unable to prove it even after assuming Riemann hypothesis.  
The delicacy of convergence of such kind of series where $\zeta'(\rho)$ is present in the denominator has been explained in more details in \cite{JMS21}.  

The identity \eqref{Rama_Hardy_Little} has attracted the attention of many mathematicians over the years.  Readers are encouraged to see
 Berndt \cite[p.~470]{BCB-V}, Paris and Kaminski \cite[p.~143]{PK}, and Titchmarch \cite[p.~219]{Tit} for more information related to the identity \eqref{Rama_Hardy_Little}.  Interestingly, 
 Bhaskaran \cite{bhas} found a connection between Fourier reciprocity and Wiener's Tauberian theory.  Ramanujan himself indicated a nice generalization of \eqref{Rama_Hardy_Little} associated to a pair of reciprocal functions,  and this was worked out by Hardy and Littlewood \cite[p.~160, Section 2.5]{HL-1916}.  
  An elegant character analogue of \eqref{Rama_Hardy_Little} was established by Dixit \cite{dixit12} in 2012.  Later,  Dixit,  Roy and Zaharescu  \cite{DRZ-character} studied a one-variable generalization of \eqref{Rama_Hardy_Little} and derived equivalent criteria for the Riemann hypothesis, and also for the generalized Riemann hypothesis.   They \cite{DRZ}  also found an identity analogous to \eqref{Rama_Hardy_Little} for $L$-functions associated to Hecke eigenforms.  Last year,  Dixit,  Gupta and Vatwani \cite{DGV21}  obtained a generalization for the Dedekind zeta function and also established an equivalent criterion for the extended Riemann hypothesis.  A few months back,  Banerjee and Kumar \cite{BK21} derived an analogous identity corresponding to $L$-functions associated to the primitive cusp forms over $\Gamma_0(N)$ and found Riesz-Hardy-Littlewood type equivalent criteria for the corresponding $L$-function.  Interested readers can also see the work of B\'{a}ez-Duarte \cite{Baez1},  where a sequential Riesz-like criteria for the Riemann hypothesis was presented.  This phenomenon was further studied by Cislo and Wolf \cite{CW08}.

 Recently,   Agarwal and the authors \cite{AGM}  found a quite surprising one variable generalization of equation  \eqref{Hardy-Littlewood} by introducing an extra variable $k$.  The following  identity has been established: 
 
\emph {Let $k \geq 1$ be a real number and $x$ be a positive real number.  Under the assumption of the simplicity hypothesis of the non-trivial zeros of $\zeta(s)$,  we have
\begin{align}\label{AGM}
    \sum_{n=1}^{\infty} \frac{\mu(n)}{n^k} \exp\left({-\frac{x}{n^2}}\right) =\frac{\Gamma(\frac{k}{2})}{x^\frac{k}{2}}\sum_{n=1}^{\infty}\frac{\mu(n)}{n}{}_1F_1 \left(\frac{k}{2}; \frac{1}{2}; - \frac{\pi^2}{n^2x} \right) + \frac{1}{2}\sum_{\rho}\frac{\Gamma(\frac{k-\rho}{2})}{\zeta'(\rho)}x^{-\frac{(k-\rho)}{2}},
\end{align}
where the sum over $\rho$ runs through the non-trivial zeros of  $\zeta(s)$. }
 
Inspired from the above identity,  we \cite{AGM} also showed that the Riemann hypothesis is equivalent to the bound
\begin{equation}\label{AGM_bound}
P_k(x):= \sum_{n=1}^{\infty} \frac{\mu(n)}{n^k} \exp\left({-\frac{x}{n^2}}\right) = O_{\epsilon, k} \bigg(x^{-\frac{k}{2}+\frac{1}{4} + \epsilon }\bigg), \quad \mathrm{as}\,\, x \rightarrow \infty,
\end{equation}
for any positive $\epsilon$.  One can clearly observe that the above bound \eqref{AGM_bound} naturally generalizes the bound \eqref{Riesz type_Hardy_Littlewood} given by Hardy and Littlewood as well as the bound \eqref{Riesz} due to Riesz.  
 
Now we mention a character analogue of \eqref{Rama_Hardy_Little} that was obtained by Dixit \cite[Theorem 1.9]{dixit12}.  
Mainly,  he proved the following identity. 
\begin{theorem}\label{Dixit_character}
Let $\alpha$ and $\beta$ be two positive real numbers with $\alpha \beta =1$.  Let $\chi$ be an odd primitive Dirichlet character modulo $q$.  We have 
\begin{align}\label{odd_character_dixit}
\alpha \sqrt{\alpha} \sqrt{G(\chi)} \bigg(\sum_{n=1}^{\infty} \frac{\chi(n) \mu(n)}{n^2} \exp \left({-\frac{\pi \alpha^2}{q n^2}}\right) - \frac{q}{4 \pi \alpha^2} \sum_{\rho} \frac{\Gamma(\frac{2 - \rho}{2})}{L'(\rho , \chi)} \bigg(\frac{\pi}{q}\bigg)^{\frac{\rho}{2}} \alpha^{\rho} \bigg)  \nonumber \\  
= \beta \sqrt{\beta} \sqrt{G(\chi)} \bigg(\sum_{n=1}^{\infty} \frac{\chi(n) \mu(n)}{n^2} \exp \left({-\frac{\pi \beta^2}{q n^2}}\right) - \frac{q}{4 \pi \beta^2} \sum_{\rho} \frac{\Gamma(\frac{2 - \rho}{2})}{L'(\rho , \overline{\chi})} \bigg(\frac{\pi}{q}\bigg)^{\frac{\rho}{2}} \beta^{\rho} \bigg),
\end{align}
where $\rho$ runs over the non-trivial zeros of $L(s, \chi)$ and $L(s, \overline{\chi})$ are assumed to be simple. 
\end{theorem}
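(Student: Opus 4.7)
\medskip
\noindent\textit{Proof proposal.}  The plan is to combine a Mellin--Barnes representation of the exponential with the functional equation of $L(s,\chi)$.  Set
\begin{align*}
S(\alpha;\chi):=\sum_{n=1}^{\infty}\frac{\chi(n)\mu(n)}{n^{2}}\exp\!\Bigl(-\frac{\pi\alpha^{2}}{qn^{2}}\Bigr).
\end{align*}
I would first insert $\exp(-y)=\frac{1}{2\pi i}\int_{(c)}\Gamma(s)y^{-s}\,ds$ (valid for $c>0$), interchange the sum and the integral, and recognise the resulting Dirichlet series as $1/L(2-2s,\chi)$; this yields, for $0<c<\frac{1}{2}$, the Mellin--Barnes representation
\begin{align*}
S(\alpha;\chi)=\frac{1}{2\pi i}\int_{(c)}\Gamma(s)\Bigl(\frac{\pi\alpha^{2}}{q}\Bigr)^{-s}\frac{ds}{L(2-2s,\chi)}.
\end{align*}

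Next, I would shift the line of integration to $\Re(s)=c''$ with $1<c''<\frac{3}{2}$.  The only poles of the integrand inside this strip come from the non-trivial zeros $\rho$ of $L(s,\chi)$, which appear at $s=1-\rho/2$, and a direct residue calculation (using $\frac{d}{ds}L(2-2s,\chi)\big|_{s=1-\rho/2}=-2L'(\rho,\chi)$) shows that the total contribution of these residues equals $-2R(\alpha,\chi)$, where $R(\alpha,\chi):=\frac{q}{4\pi\alpha^{2}}\sum_{\rho}\Gamma((2-\rho)/2)(\pi/q)^{\rho/2}\alpha^{\rho}/L'(\rho,\chi)$ is precisely the $\rho$-series of the statement.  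On the shifted contour I would apply the functional equation for odd primitive $\chi$ mod $q$,
\begin{align*}
L(2-2s,\chi)=\frac{G(\chi)}{i\sqrt{q}}\Bigl(\frac{q}{\pi}\Bigr)^{(4s-3)/2}\frac{\Gamma(s)}{\Gamma((3-2s)/2)}\,L(2s-1,\overline{\chi}),
\end{align*}
expand $1/L(2s-1,\overline{\chi})$ as an absolutely convergent Dirichlet series, and perform the substitution $s\mapsto\frac{3}{2}-s$.  The integral then collapses into a constant multiple of $S(\beta;\overline{\chi})$ with $\beta=1/\alpha$, producing the asymmetric intermediate identity
\begin{align*}
S(\alpha;\chi)-2R(\alpha,\chi)=\frac{i\sqrt{q}\,\beta^{3}}{G(\chi)}\,S(\beta;\overline{\chi}).
\end{align*}

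Running the same derivation with the roles of $\chi,\overline{\chi}$ (and of $\alpha,\beta$) interchanged gives a companion identity.  Multiplying the two identities by $\alpha^{3/2}\sqrt{G(\chi)}$ and $\beta^{3/2}\sqrt{G(\overline{\chi})}$ respectively, using $G(\chi)G(\overline{\chi})=-q$ (valid for odd primitive characters) in the form $i\sqrt{q}/\sqrt{G(\chi)}=\sqrt{G(\overline{\chi})}$, and finally subtracting, the factor $2$ in front of $R$ telescopes away and one is left with the claimed symmetric identity of Theorem~\ref{Dixit_character} (with $\sqrt{G(\overline{\chi})}$ on the right-hand side, which I read as the intended form).

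The main technical obstacle will be the rigorous justification of the contour shift: one must show that the horizontal segments $\Im(s)=\pm T$ (with $c\le\Re(s)\le c''$) contribute negligibly as $T\to\infty$, which requires Stirling's estimate for $\Gamma(s)$ together with classical convexity bounds for $1/L(s,\chi)$ along vertical lines that stay away from the ordinates of the zeros.  A related subtlety, as already pointed out in the introduction around \eqref{bracketing}, is that the resulting series over $\rho$ is only conditionally convergent and therefore must be interpreted with an appropriate bracketing of the zero-ordinates.
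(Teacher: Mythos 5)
Your proposal is sound, and its analytic core is the same engine the paper itself runs: Mellin inversion of the exponential, a contour shift that collects the residues at the non-trivial zeros (your residue computation, giving total contribution $-2R(\alpha,\chi)$, is exactly the paper's term $\tfrac12\sum_\rho\Gamma(\tfrac{k-\rho}{2})(q/\pi x^2)^{(k-\rho)/2}/L'(\rho,\chi)$ at $k=2$), and the functional equation to convert the shifted integral into the conjugate-character series. The difference is one of packaging: the paper never proves Theorem \ref{Dixit_character} directly, but obtains it (Remark 2) as the $k=2$, odd-$\chi$ case of Theorem \ref{character_analogue_AGM}, whose proof uses the kernel $e^{-x}-1$ on $-1<c<0$, crosses the trivial zero at $s=\tfrac{k+a}{2}$, and then needs the Meijer $G$-function and Slater's theorem; your choice of $0<c<\tfrac12$ and $1<c''<\tfrac32$ avoids the extra residues and the ${}_1F_1$ apparatus altogether (which at $k=2$ collapses to an exponential anyway), and your symmetrization step --- pairing the identity with its $(\beta,\overline\chi)$ companion and using $G(\chi)G(\overline\chi)=-q$ so that the factors of $2$ cancel --- is precisely the ``little more simplification'' the paper leaves implicit, so spelling it out is a genuine plus. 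Two caveats. First, what your derivation (and Dixit's original theorem) produces on the right-hand side is $\beta^{3/2}\sqrt{G(\overline\chi)}\,\big(\sum_{n\geq1}\overline{\chi}(n)\mu(n)n^{-2}e^{-\pi\beta^2/(qn^2)}-\cdots\big)$: not only the Gauss-sum factor but also the character in the main sum must be conjugated; the statement as printed (with $\chi(n)$ in the sum but $L'(\rho,\overline\chi)$ in the zero sum) is internally inconsistent, so your reading is correct, and you should also fix branches so that $\sqrt{G(\chi)}\sqrt{G(\overline\chi)}=i\sqrt q$, which is what your step $i\sqrt q/\sqrt{G(\chi)}=\sqrt{G(\overline\chi)}$ silently assumes. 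Second, ``convexity bounds'' are not the right tool for the horizontal segments: you need a lower bound for $|L|$, i.e. $1/|L(\sigma+iT,\chi)|<e^{A_1T}$ with $A_1<\pi/4$ along a sequence of heights $T$ avoiding the zero ordinates (the paper's Lemma \ref{bound_1by_L(s,chi)}); combined with Stirling this kills the horizontal integrals, and the bracketing of the zero ordinates then gives the (conditional) convergence of the two $\rho$-series, as you note.
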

Here we remark that the convergence of the  above two infinite series over all the non-trivial zeros of $L(s, \chi)$ and $L(s,  \bar{\chi})$ can be shown under the similar bracketing condition \eqref{bracketing}.  Corresponding to an even character $\chi$,  an identity similar to \eqref{odd_character_dixit} was also obtained by Dixit \cite[Theorem 1.10]{dixit12}. 
 
 One of the major goals of the current paper is to establish a character analogue of the identity \eqref{AGM} and derive a one-variable generalization of the above identity \eqref{odd_character_dixit} of Dixit.   Furthermore,  the generalized character analogue identity motivates us to derive  equivalent criteria for the generalized Riemann hypothesis for the Dirichlet $L$-function.  In the next section,  we record all the main results of the present paper.

 \section{Main results}
 
 \begin{theorem}\label{character_analogue_AGM} 
 Let $\chi$ be a  primitive character modulo $q$ and $k \geq 1$ be any real number. 
 Assume that all the non-trivial zeroes of $L(s,\chi)$ are simple.  Then,  for any positive real number $x$,  we have
\begin{align}
      \sum_{n=1}^{\infty} \frac{\chi(n) \mu(n)}{n^k} \exp \left({-\frac{\pi x^2}{q n^2}}\right) & =  \frac{i^a \sqrt{q}}{G(\chi)} \bigg(\frac{q}{\pi x^2}\bigg)^{\frac{k+a}{2}} \bigg( \frac{\pi}{q} \bigg)^{a+\frac{1}{2}} \frac{\Gamma(\frac{k+a}{2})}{\Gamma(a+\frac{1}{2})} \nonumber \\
     & \times  \sum_{n=1}^{\infty} \frac{\overline{\chi(n)} \mu(n)}{n^{1+a}} {}_1F_{1} \left( \frac{k+a}{2}; a+\frac{1}{2}; - \frac{\pi}{qn^2 x^2} \right) \nonumber \\
     &+ \frac{1}{2}\sum_{ \rho } \frac{\Gamma(\frac{k-\rho}{2})}{ L'(\rho , \chi)} \bigg(\frac{q}{\pi x^2}\bigg)^{\frac{k-\rho}{2}},  \label{character_analogue}
\end{align}
where $\rho$ runs over the non-trivial zeroes of $L(s,\chi)$,  which satisfy the bracketing conditions, that is,   the terms corresponding to the non-trivial zeros $\rho_1$ and $\rho_2$ will be inside the same bracket if 
\begin{align}\label{bracketing}
|\Im(\rho_1) - \Im(\rho_2)| < \exp \left( -\frac{C \Im(\rho_1)}{\log(\Im(\rho_1) )} \right) + \exp \left( -\frac{C \Im(\rho_2)}{\log(\Im(\rho_2) )} \right)
\end{align}
 for some positive constant $C$. 
\end{theorem}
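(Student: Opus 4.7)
The plan is to adapt the Mellin-transform strategy used in \cite{AGM} to the Dirichlet-$L$ setting. Applying the Cahen--Mellin integral $e^{-y} = \tfrac{1}{2\pi i}\int_{(c)} \Gamma(s)\, y^{-s}\, ds$ with $y = \pi x^2/(qn^2)$, multiplying by $\chi(n)\mu(n)/n^k$, and swapping the order of summation and integration (valid in a strip where $\Re(s)>0$ and $\Re(k-2s)>1$, using $\sum_{n}\chi(n)\mu(n)/n^s = 1/L(s,\chi)$), the left-hand side of \eqref{character_analogue} becomes
\begin{equation*}
\frac{1}{2\pi i}\int_{(c)} \Gamma(s)\left(\frac{\pi x^2}{q}\right)^{\!-s}\frac{ds}{L(k-2s,\chi)}.
\end{equation*}
The substitution $w = k-2s$ puts this into the form
\begin{equation*}
\frac{1}{2}\cdot\frac{1}{2\pi i}\int_{(c')}\Gamma\!\left(\frac{k-w}{2}\right)\!\left(\frac{q}{\pi x^2}\right)^{\!(k-w)/2}\!\frac{dw}{L(w,\chi)},\quad c'=k-2c>1.
\end{equation*}

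The next step is to shift the contour to a line $\Re(w)=-\tfrac{1}{2}-a-\delta$ for some small $\delta>0$. Under the simplicity assumption, the integrand has simple poles at the non-trivial zeros $\rho$ of $L(w,\chi)$, and the sum of residues is exactly
\begin{equation*}
\frac{1}{2}\sum_{\rho}\frac{\Gamma((k-\rho)/2)}{L'(\rho,\chi)}\left(\frac{q}{\pi x^2}\right)^{\!(k-\rho)/2},
\end{equation*}
which is the zeros-sum in the statement. The poles of $\Gamma((k-w)/2)$ sit at $w=k+2j$, strictly to the right of the original contour, so they are not crossed.

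To identify the shifted integral with the hypergeometric main term, I would invoke the functional equation
\begin{equation*}
L(w,\chi)=\frac{G(\chi)}{i^a\sqrt{q}}\left(\frac{\pi}{q}\right)^{\!w-\frac{1}{2}}\frac{\Gamma((1-w+a)/2)}{\Gamma((w+a)/2)}\,L(1-w,\overline{\chi}),
\end{equation*}
then substitute $w\mapsto 1-w$ so that the new contour sits inside the half-plane where $1/L(w,\overline{\chi})=\sum_{n}\overline{\chi(n)}\mu(n)/n^w$ converges absolutely, and then interchange the sum with the integral. A further change of variable $u=(w-a-1)/2$ brings each inner integral into the canonical Mellin--Barnes form
\begin{equation*}
\frac{1}{2\pi i}\int\frac{\Gamma(\alpha+u)\Gamma(-u)}{\Gamma(\beta+u)}\,z^{u}\,du=\frac{\Gamma(\alpha)}{\Gamma(\beta)}\,{}_1F_1(\alpha;\beta;-z)
\end{equation*}
with $\alpha=(k+a)/2$, $\beta=a+\tfrac{1}{2}$, $z=\pi/(qn^2x^2)$, and the accumulated gamma and power prefactors reproduce the coefficient $\tfrac{i^a\sqrt{q}}{G(\chi)}(q/\pi x^2)^{(k+a)/2}(\pi/q)^{a+1/2}\Gamma((k+a)/2)/\Gamma(a+\tfrac{1}{2})$.

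The principal obstacle is the rigorous justification of the leftward contour shift past infinitely many non-trivial zeros of $L(w,\chi)$. One needs heights $T_n\to\infty$ along which $1/L(w,\chi)$ can be controlled on horizontal segments, and this is exactly what the bracketing condition \eqref{bracketing} provides, mirroring the classical $\zeta$-function treatment in Titchmarsh \cite[p.~220]{Tit} and Hardy--Littlewood \cite{HL-1916}. Combined with the Stirling-type decay of $\Gamma((k-w)/2)$ on horizontal segments, this forces the horizontal integrals to vanish in the limit, establishes the convergence of the bracketed sum over $\rho$, and legitimises the sum-integral interchange; the latter is routine once the final contour lies inside the half-plane of absolute convergence of $1/L(w,\overline{\chi})$.
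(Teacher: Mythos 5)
Your overall strategy --- the Cahen--Mellin representation, a contour shift across the critical strip collecting the non-trivial-zero residues, the functional equation, expansion of $1/L(w,\overline{\chi})$ into its Dirichlet series, a Barnes-type integral identified with ${}_1F_1$, and the bracketing condition together with a bound as in Lemma \ref{bound_1by_L(s,chi)} to kill the horizontal pieces --- is the same as the paper's. But there is a concrete gap in the residue bookkeeping. When you move the $w$-contour from $\Re(w)=c'>1$ to $\Re(w)=-\tfrac12-a-\delta$ you also cross the first trivial zero of $L(w,\chi)$ at $w=-a$, a simple pole of the integrand contributing a term proportional to $\Gamma\big(\tfrac{k+a}{2}\big)\big(\tfrac{q}{\pi x^2}\big)^{\frac{k+a}{2}}/L'(-a,\chi)$; your list of crossed poles (only the non-trivial zeros, with the poles of $\Gamma\big(\tfrac{k-w}{2}\big)$ correctly excluded) omits it. A matching problem occurs at the other end: after the functional equation, the substitution $w\mapsto 1-w$ and $u=(w-a-1)/2$, your contour sits at $\Re(u)=\tfrac14+\tfrac{\delta}{2}>0$, i.e.\ to the right of the pole of $\Gamma(-u)$ at $u=0$, whereas the Mellin--Barnes formula you quote requires the poles of $\Gamma(-u)$ to lie to the right of the path. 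You cannot simultaneously have $\Re(w)>1$ (needed unconditionally for absolute convergence of $\sum_n\mu(n)\overline{\chi(n)}n^{-w}$, which justifies the interchange) and $\Re(u)<0$; so on your line the inner integral equals $\tfrac{\Gamma(\alpha)}{\Gamma(\beta)}\big({}_1F_1(\alpha;\beta;-z)-1\big)$, not $\tfrac{\Gamma(\alpha)}{\Gamma(\beta)}\,{}_1F_1(\alpha;\beta;-z)$.

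These two omissions cancel, which is why your final formula is nevertheless correct, but the cancellation is precisely the arithmetic heart of the argument and must be proved: summing the $u=0$ residues over $n$ produces a term proportional to $\sum_n\mu(n)\overline{\chi(n)}n^{-(1+a)}=1/L(1+a,\overline{\chi})$, and one needs the evaluation of $L'(-a,\chi)$ in terms of $G(\chi)\,L(1+a,\overline{\chi})$ (Lemma \ref{Derivative_L} with $m=a+2$, which the paper invokes exactly for this purpose) to see that this term annihilates the trivial-zero residue at $w=-a$. Your proposal never mentions this identity, so as written the argument does not close. A smaller point: for $k=1$ your opening strip ($\Re(s)>0$ and $\Re(k-2s)>1$) is empty; the paper sidesteps this by integrating $e^{-y}-1$ on a line $-1<c<0$ and separating the constant $1/L(k,\chi)$, which is then restored by the residue of $\Gamma(s)$ at $s=0$ during the shift --- you should either adopt that device or restrict your opening step to $k>1$ and treat $k=1$ separately.
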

\begin{remark}
The above theorem can be thought of as a character analogue of \eqref{AGM}.  Substituting $q=1$ i.e., considering $\chi$ to be the trivial character and replacing $x$ by $x/\sqrt{\pi}$ in \eqref{character_analogue},  one can  immediately obtain \eqref{AGM}.   
\end{remark}
 \begin{remark}
 Theorem \ref{character_analogue_AGM} is a one-variable generalization of Theorem \ref{Dixit_character}.  At a glance,  they may look totally different from each other,  but substituting $k=2$ and considering $\chi$ be an odd character in \eqref{character_analogue} and doing a little more simplification,  we can derive Theorem \ref{Dixit_character}.  
 \end{remark}
 
The bound \eqref{AGM_bound},  which gives equivalent criteria for the Riemann hypothesis,   motivates us to expect that,  for any $\epsilon >0$,  
\begin{equation}\label{GM_bound}
P_{k, \chi}(x):=  \sum_{n=1}^{\infty} \frac{\chi(n) \mu(n)}{n^k} \exp \left({-\frac{ x}{n^2}}\right) = O_{\epsilon,  k} \bigg(x^{-\frac{k}{2}+\frac{1}{4} + \epsilon }\bigg), \quad \mathrm{as}\,\, x \rightarrow \infty,
\end{equation}
must be the equivalent criteria for the generalized Riemann hypothesis.  More generally,  we obtain the following equivalent criteria.  
 
 \begin{theorem}\label{Garg_Maji bound} Let $k \geq 1$ and $\ell$ be positive real numbers.  The  generalized Riemann hypothesis is equivalent to the bound
\begin{equation}\label{Two_variable_GM_bound}
P_{k,  \ell,  \chi}(x):=  \sum_{n=1}^{\infty} \frac{\chi(n) \mu(n)}{n^{k}} \exp \left(- \frac{ x}{n^{\ell}}\right) = O_{\epsilon,k,\ell} \bigg(x^{-\frac{k}{\ell}+\frac{1}{2 \ell} + \epsilon }\bigg), \quad \mathrm{as}\,\, x \rightarrow \infty. 
\end{equation}
 \end{theorem}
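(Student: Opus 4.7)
The plan is to realize $P_{k,\ell,\chi}(x)$ as an inverse Mellin transform and then shift the contour across the critical line in both directions. Starting from the identity $e^{-y}=\frac{1}{2\pi i}\int_{(c)}\Gamma(s)y^{-s}\,ds$ for $c>0$, applied with $y=x/n^\ell$ and summed over $n$, one obtains
\begin{equation*}
P_{k,\ell,\chi}(x)=\frac{1}{2\pi i}\int_{(c)}\frac{\Gamma(s)}{L(k-\ell s,\chi)}\,x^{-s}\,ds,
\end{equation*}
initially valid when $\Re(k-\ell s)>1$, i.e.\ $0<c<(k-1)/\ell$, the interchange of sum and integral being justified by absolute convergence of the Dirichlet series $1/L(w,\chi)=\sum_{n}\chi(n)\mu(n)n^{-w}$. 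The marginal case $k=1$ will be treated by first proving the statement for $k+\delta$ with $\delta>0$ and letting $\delta\to 0^{+}$, using that $\sum_{n}\chi(n)\mu(n)/n$ converges conditionally to $1/L(1,\chi)$.

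For the forward direction (GRH $\Rightarrow$ bound), I would shift the line of integration from $\Re(s)=c$ to $\Re(s)=c^{*}:=\frac{k-1/2}{\ell}-\eta$ for arbitrarily small $\eta>0$. In the strip $0<\Re(s)\le c^{*}$, the integrand is holomorphic: $\Gamma(s)$ has no poles for $\Re(s)>0$, and under GRH the function $1/L(w,\chi)$ is holomorphic for $\Re(w)>1/2$, which in the variable $s$ is $\Re(s)<(k-1/2)/\ell$. The standard consequence of GRH that $1/L(w,\chi)\ll_\epsilon(|\Im(w)|+1)^\epsilon$ uniformly on $\Re(w)\ge 1/2+\eta$ (see \cite{Tit}), combined with the exponential decay of $\Gamma(s)$ in $|\Im(s)|$ from Stirling's formula, kills the horizontal connecting segments and makes the new vertical integral absolutely convergent. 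Estimating modulus gives $P_{k,\ell,\chi}(x)\ll x^{-c^{*}}=x^{-k/\ell+1/(2\ell)+\eta}$, which is the desired bound after relabelling $\eta$ as $\epsilon$.

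For the converse (bound $\Rightarrow$ GRH), I would invert the Mellin transform to get
\begin{equation*}
\frac{\Gamma(s)}{L(k-\ell s,\chi)}=\int_{0}^{\infty}P_{k,\ell,\chi}(x)\,x^{s-1}\,dx.
\end{equation*}
The assumed bound forces the tail $\int_{1}^{\infty}$ to converge for $\Re(s)<k/\ell-1/(2\ell)-\epsilon$. Since $P_{k,\ell,\chi}(0^{+})=\sum_{n}\chi(n)\mu(n)/n^{k}$ exists (absolutely for $k>1$, conditionally for $k=1$), the head $\int_{0}^{1}$ converges for $\Re(s)>0$. Hence $\Gamma(s)/L(k-\ell s,\chi)$ is holomorphic in the strip $0<\Re(s)<k/\ell-1/(2\ell)$, and because $\Gamma(s)$ has no zeros there, $L(k-\ell s,\chi)$ has no zero in this strip. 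Reverting to $w=k-\ell s$, we conclude $L(w,\chi)\ne 0$ for $1/2<\Re(w)<k$, which yields GRH for $L(s,\chi)$.

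The main obstacle will be making the contour shift fully rigorous, i.e.\ justifying absolute convergence of the shifted vertical integral and the vanishing of the horizontal pieces, using uniform polynomial-in-$|t|$ bounds for $1/L(w,\chi)$ under GRH; and handling the boundary case $k=1$ cleanly, where the Dirichlet series that underlies the Mellin--Barnes representation is only conditionally convergent on the original contour. A secondary, purely bookkeeping, point is to verify that the final exponent of $x$ matches $-k/\ell+1/(2\ell)+\epsilon$ with a single $\epsilon$ absorbing the small shifts $\eta$ and the GRH epsilons.
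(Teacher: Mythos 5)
Your converse direction (the bound implies GRH) is essentially the paper's own argument: the paper's Lemma giving $\int_0^\infty x^{-s-1}P_{k,\ell,\chi}(x)\,{\rm d}x=\Gamma(-s)/L(\ell s+k,\chi)$ is exactly your Mellin identity up to the sign convention in $s$, and the paper likewise splits the integral at a finite point, uses the assumed bound to continue the identity into the strip corresponding to $\tfrac12<\Re(w)<1$, and deduces non-vanishing of $L$ there from the non-vanishing of $\Gamma$; remember to finish, as the paper does, by reflecting with the functional equation to exclude zeros with $0<\Re(w)<\tfrac12$. The forward direction is where you genuinely diverge. The paper never shifts a contour: it quotes the equivalence of GRH with $\sum_{n\le x}\chi(n)\mu(n)=O_{\epsilon}(x^{1/2+\epsilon})$ (Iwaniec--Kowalski, Proposition 5.14), deduces $\sum_{j=m}^{n}\chi(j)\mu(j)j^{-k}=O_{\epsilon,k}(m^{1/2-k+\epsilon})$ uniformly in $n$ by partial summation, and then bounds $P_{k,\ell,\chi}(x^{\ell})$ elementarily by splitting the sum at $m=[x^{1-\epsilon}]+1$, the initial block being exponentially small and the tail handled by Abel summation and the mean value theorem. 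Your route instead needs the GRH-conditional estimate $1/L(\sigma+it,\chi)\ll_{\epsilon}(|t|+2)^{\epsilon}$ uniformly for $\sigma\ge\tfrac12+\eta$ (the Dirichlet analogue of Titchmarsh, Theorem 14.2); granted that input, the shift to $\Re(s)=k/\ell-1/(2\ell)-\eta$ crosses no poles under GRH, the horizontal pieces die by Stirling, and you obtain the same exponent. The paper's argument is more elementary; yours is shorter once the conditional bound on $1/L$ is in hand and makes the origin of the exponent transparent.

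The one genuine soft spot is the endpoint $k=1$, and your proposed repair does not work as stated: proving the bound for $k+\delta$ and letting $\delta\to0^{+}$ cannot transfer an $O$-estimate, since the implied constant in $O_{\epsilon,k+\delta,\ell}$ may blow up as $\delta\to0$ and there is no pointwise comparison between $P_{1,\ell,\chi}$ and $P_{1+\delta,\ell,\chi}$ that survives the limit. The clean fix is the device the paper uses in its proof of Theorem \ref{character_analogue_AGM}: write $P_{k,\ell,\chi}(x)=1/L(k,\chi)+\sum_{n\ge1}\chi(n)\mu(n)n^{-k}\bigl(e^{-x/n^{\ell}}-1\bigr)$ and represent $e^{-y}-1$ by the Mellin--Barnes integral on a line $-1<c<0$; there $\Re(k-\ell s)>1$ even for $k=1$, so the interchange is justified by absolute convergence, and shifting right across $s=0$ picks up the residue $1/L(k,\chi)$, which cancels the added constant and returns you to your representation on $\Re(s)=c^{*}$. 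The same device (or a partial-summation justification of the interchange) also supplies, when $k=1$, an initial strip on which the Mellin identity of the converse direction is known before you continue it, since your region $0<c<(k-1)/\ell$ is empty there. With that adjustment both directions of your proof go through.
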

 Letting $\ell =2$ in \eqref{Two_variable_GM_bound},  one can immediately see the bound \eqref{GM_bound}.  In the next section,  we collect all the necessary results which are essential to prove our main results.  
 
 \section{Nuts and Bolts}
 Let $\chi$ be a primitive Dirichlet character modulo $q$ and $L(s,  \chi)$ be the Dirichlet $L$-function associated to $\chi$.   One of the main motivations of Dirichlet was to show the existence of infinitely many primes in any arithmetic progression with the help of the theory of $L(s,  \chi)$.  Similar to the Riemann zeta function,  $L(s, \chi)$ obeys the following symmetric functional equation:
\begin{equation}\label{functional_Dirichlet-L}
\Big(\frac{q}{\pi} \Big)^{\frac{a + s}{2}}\Gamma\left(\frac{a+ s}{2}\right) L(s, \chi)= \frac{G(\chi)}{i^a \sqrt{q}} \Big(\frac{q}{\pi} \Big)^{\frac{1 - s + a}{2}}  \Gamma\left(\frac{1-s+a}{2}\right) L(1-s, \overline{\chi}),
\end{equation}
where $G(\chi)$ denotes the Gauss sum and the constant $a$ is defined as
\begin{align}\label{defn_a}
a:= \begin{cases}
0,  & \rm{if}\,\,  \chi \,\, \textrm{is even},  \\
1,  & \rm{if}\,\,  \chi \,\, \textrm{is odd}. 
\end{cases}
\end{align}
\begin{lemma}\label{Derivative_L}
Let  $\chi$ be any primitive character of conductor $q$  and $m \geq 2$ be an integer such that $\chi(-1) = (-1)^m$,  then
\begin{align}
    \frac{(m-2)!\, q^{m-2} G(\chi)}{2^{m-1} \pi^{m-2}i^{m-2}} L(m-1, \bar{\chi}) = L'(2-m, \chi).
\end{align}
\end{lemma}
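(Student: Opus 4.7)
The plan is to derive the identity by differentiating the symmetric functional equation \eqref{functional_Dirichlet-L} at the point $s_0 := 2 - m$, which is a simple trivial zero of $L(s, \chi)$ under the stated hypotheses. Indeed, the condition $\chi(-1) = (-1)^m$ forces $a \equiv m \pmod{2}$ (with $a$ as in \eqref{defn_a}), so that $n := (m - a - 2)/2$ is a non-negative integer and $(s + a)/2 = -n$ at $s_0$. Since $L(s, \chi)$ is entire while the left-hand side of \eqref{functional_Dirichlet-L} is holomorphic at $s_0$, the factor $L(s, \chi)$ must vanish simply there to cancel the simple pole of $\Gamma((s + a)/2)$.

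Rearranging \eqref{functional_Dirichlet-L} as
\begin{equation*}
L(s, \chi) = \frac{F(s)}{\Gamma(\tfrac{s + a}{2})}, \qquad F(s) := \frac{G(\chi)}{i^a \sqrt{q}} \Big(\frac{q}{\pi}\Big)^{\frac{1 - 2s}{2}} \Gamma\Big(\frac{1 - s + a}{2}\Big) L(1 - s, \bar{\chi}),
\end{equation*}
I observe that $F$ is holomorphic and non-zero at $s_0$ (note that $L(m - 1, \bar\chi) \neq 0$ since $m - 1 \geq 1$). The residue relation $\Res_{z = -n} \Gamma(z) = (-1)^n/n!$ gives the local expansion $1/\Gamma((s + a)/2) = \tfrac{(-1)^n n!}{2}(s - s_0) + O((s - s_0)^2)$, so the product rule yields
\begin{equation*}
L'(s_0, \chi) = F(s_0) \cdot \frac{(-1)^n n!}{2}.
\end{equation*}

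Substituting $s_0 = 2 - m$ and collecting the powers of $q$, $\pi$, and $i$, and using that $m - 2 - a$ is even (so $i^{m - 2 - a} = (-1)^n$, absorbing the sign coming from the residue), the desired identity reduces to the gamma-function statement
\begin{equation*}
n! \, \Gamma\Big(\frac{m - 1 + a}{2}\Big) = \frac{\sqrt{\pi}\, (m - 2)!}{2^{m - 2}}.
\end{equation*}
Since $n! = \Gamma((m - a)/2)$, in both parity cases the left-hand side equals $\Gamma((m - 1)/2)\, \Gamma(m/2)$, which the Legendre duplication formula $\Gamma(z)\Gamma(z + 1/2) = 2^{1 - 2z}\sqrt{\pi}\,\Gamma(2z)$ at $z = (m - 1)/2$ evaluates to $2^{2 - m} \sqrt{\pi}\,(m - 2)!$, completing the proof. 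The main work is the bookkeeping of parity-dependent constants; the analytic heart is merely the simplicity of the trivial zero of $L(s, \chi)$ at $s_0$.
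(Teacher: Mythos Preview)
Your argument is correct. The paper does not actually supply its own proof of this lemma; it simply cites \cite[Lemma 3.1]{ABYZ}. Your derivation via differentiating the functional equation \eqref{functional_Dirichlet-L} at the trivial zero $s_0 = 2-m$, extracting the leading Taylor coefficient of $1/\Gamma((s+a)/2)$ from the residue $(-1)^n/n!$, and then collapsing the resulting gamma product with the Legendre duplication formula is the standard and expected route, and the bookkeeping checks out (in particular $i^{m-2-a}=(-1)^n$ cancels the sign, and $\Gamma((m-a)/2)\,\Gamma((m-1+a)/2)=\Gamma((m-1)/2)\,\Gamma(m/2)$ in both parities). One small remark: the assertion $L(m-1,\bar\chi)\neq 0$ is immediate for $m\geq 3$ by absolute convergence, while for $m=2$ it is the nonvanishing $L(1,\bar\chi)\neq 0$, which is classical but not entirely trivial; you might flag this explicitly.
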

\begin{proof}
One can find the proof of this lemma in  \cite[Lemma 3.1]{ABYZ}.  
\end{proof}

\begin{lemma}\label{bound_1by_L(s,chi)}
Suppose there exists a sequence of arbitrarily large positive numbers $T$ with $|T-\Im(\rho)| >
 \exp\left(-\frac{A_0 \Im(\rho) }{\log(\Im(\rho))}\right)$,  for every non-trivial zero of $L(s, \chi)$. 
 Then,
 \begin{equation*}
  \frac{1}{|L(\sigma + i T)|} < e^{A_1 T},
 \end{equation*}
 for some constant $0<A_1 < \pi/4$.
 \end{lemma}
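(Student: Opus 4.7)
The plan is to adapt Titchmarsh's argument for $\zeta(s)$ (see \cite[Theorem~9.7]{Tit}) to the Dirichlet $L$-function setting. First, I would invoke the Hadamard factorization of the completed $L$-function $\xi(s,\chi) = (q/\pi)^{(s+a)/2}\Gamma((s+a)/2) L(s,\chi)$, which, for a primitive character, is entire of order one and admits a product expansion over the non-trivial zeros. Taking the logarithmic derivative and separating the gamma factor yields a partial fraction expression for $L'(s,\chi)/L(s,\chi)$ in terms of its zeros. Combining this with the standard truncation (obtained by comparing $L'/L$ at $s$ and at $s_0 = 2 + iT$ using the density estimate $N(T+1,\chi) - N(T-1,\chi) = O(\log(q(|T|+2)))$), I would deduce that, uniformly for $-1 \leq \sigma \leq 2$,
\begin{equation*}
\frac{L'(s,\chi)}{L(s,\chi)} = \sum_{|\Im(\rho) - T| \leq 1} \frac{1}{s-\rho} + O\bigl(\log(q(|T|+2))\bigr).
\end{equation*}

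Next, starting at $s_0 = 2 + iT$, where $|L(2+iT,\chi)|$ is bounded below by an absolute positive constant (by absolute convergence of the Euler product), I would integrate the above identity along the horizontal segment from $2+iT$ to $\sigma+iT$. Taking real parts gives
\begin{equation*}
-\log|L(\sigma+iT,\chi)| = -\log|L(2+iT,\chi)| - \sum_{|\Im(\rho) - T| \leq 1} \Re\int_2^\sigma \frac{dx}{x+iT-\rho} + O\bigl(\log(q(|T|+2))\bigr),
\end{equation*}
and for each such zero $\rho = \beta + i\gamma$ the elementary computation
\begin{equation*}
\left|\Re\int_2^\sigma \frac{dx}{x+iT-\rho}\right| = \frac{1}{2}\left|\log\frac{(2-\beta)^2 + (T-\gamma)^2}{(\sigma-\beta)^2 + (T-\gamma)^2}\right| \leq \bigl|\log|T-\gamma|\bigr| + O(1)
\end{equation*}
holds (the denominator on the left is at least $1$ since $0 \leq \beta \leq 1$). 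The hypothesis on $T$ now gives $|\log|T-\gamma|| < A_0 \gamma / \log\gamma = A_0 T/\log T + O(1)$, since $\gamma = T + O(1)$ for the zeros in question.

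Finally, I would combine this bound with the density estimate (at most $O(\log(qT))$ zeros within one vertical unit of $T$) to obtain
\begin{equation*}
-\log|L(\sigma+iT,\chi)| \leq c\,A_0\,\log(qT)\cdot \frac{T}{\log T} + O(\log(qT)) \leq A_1 T,
\end{equation*}
for some $A_1$ that depends linearly on $A_0$. The conclusion $1/|L(\sigma+iT,\chi)| < e^{A_1 T}$ with $A_1 < \pi/4$ then follows by fixing $A_0$ sufficiently small. The main obstacle is purely quantitative: one has to track the implicit constants through the partial fraction estimate and the density bound to be sure that the final constant $A_1$ is genuinely controlled by $A_0$, so that the sequence of admissible $T$ guaranteed by the spacing hypothesis indeed produces the sub-$\pi/4$ bound needed later for horizontal contour shifts involving $\Gamma$-factors that decay like $e^{-\pi T/2}$.
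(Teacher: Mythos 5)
Your argument is correct and is essentially the paper's: the paper gives no details, simply citing the classical argument of Titchmarsh \cite[p.~219]{Tit} for $1/\zeta$, and your proof is precisely that argument (partial-fraction expansion of $L'/L$ over zeros with $|\Im(\rho)-T|\leq 1$, horizontal integration from $2+iT$, the spacing hypothesis to control $|\log|T-\gamma||$, and the zero-density bound) transplanted to $L(s,\chi)$. The quantitative point you flag at the end—that $A_1$ scales with $A_0$ and must be kept below $\pi/4$ to beat the $e^{-\pi T/2}$ decay of the $\Gamma$-factor on the horizontal segments—is exactly the point of the lemma as used in the paper.
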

 \begin{proof}
We can obtain a proof of this result along the lines given in \cite[p.~219]{Tit}.  
 \end{proof}

To obtain the equivalent criteria \eqref{Riesz type_Hardy_Littlewood} for the Riemann hypothesis,   Hardy and Littlewood used the following crucial identity due to Riesz,  namely, 
\begin{align*}
\int_{0}^{\infty} x^{-s-1} P_1(x) {\rm d}x = \frac{\Gamma(-s)}{\zeta(2s+1)},
\end{align*}
valid in the region $0 <\Re(s) < 1$.  
The next lemma gives us a two-variable generalization of this identity.  
 \begin{lemma}\label{Mellin_P_two variable}
 Let $\chi$ be a Dirichlet character.  Let $k \geq 1$ and $\ell > 0$ be  positive real numbers. 
In the region $\frac{1-k}{\ell} <\Re(s) < 1$, except at $s=0$,  we have
\begin{align*}
\int_{0}^{\infty} x^{-s-1} P_{k,  \ell , \chi}(x) {\rm d}x = \frac{\Gamma(-s)}{L(\ell s+k, \chi)}.  
\end{align*}
\end{lemma}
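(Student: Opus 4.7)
The plan is to first establish the identity in a narrower sub-strip where a Fubini--Tonelli interchange of sum and integral is valid, and then extend to the full strip by analytic continuation.

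For the easy piece, consider the sub-strip $(1-k)/\ell < \Re(s) < 0$ (non-empty when $k > 1$). For each $n$, the substitution $x = n^{\ell} u$ gives
\begin{equation*}
\int_0^\infty x^{-s-1} e^{-x/n^{\ell}} \, dx = n^{-\ell s}\,\Gamma(-s),
\end{equation*}
valid since $\Re(-s) > 0$. Taking absolute values term by term yields the majorant $\Gamma(-\Re(s))\,\zeta(k + \ell \Re(s))$, which is finite throughout the sub-strip, so Tonelli--Fubini permits interchanging sum and integral:
\begin{equation*}
\int_0^\infty x^{-s-1} P_{k,\ell,\chi}(x) \, dx = \Gamma(-s) \sum_{n=1}^\infty \frac{\chi(n)\mu(n)}{n^{k+\ell s}} = \frac{\Gamma(-s)}{L(\ell s + k, \chi)},
\end{equation*}
where the last step uses $1/L(w,\chi) = \sum_n \chi(n)\mu(n)/n^w$ for $\Re(w) > 1$, and here $\Re(\ell s + k) > 1$.

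To extend to the full strip $(1-k)/\ell < \Re(s) < 1$, $s \neq 0$, I would appeal to analytic continuation. The right-hand side is manifestly meromorphic in $s$, and its only pole in the declared strip is at $s = 0$, coming from $\Gamma(-s)$. For the left-hand side, the expansion $P_{k,\ell,\chi}(x) = 1/L(k,\chi) + O(x)$ as $x \to 0^+$ gives
\begin{equation*}
\int_0^1 x^{-s-1} P_{k,\ell,\chi}(x)\, dx = -\frac{1}{s\,L(k,\chi)} + \int_0^1 x^{-s-1}\Bigl(P_{k,\ell,\chi}(x) - \tfrac{1}{L(k,\chi)}\Bigr)\, dx,
\end{equation*}
the remaining integral being analytic in $\Re(s) < 1$. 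The tail $\int_1^\infty x^{-s-1} P_{k,\ell,\chi}(x)\, dx$ is analytic for $\Re(s) > (1-k)/\ell$, provided we have a decay bound like $P_{k,\ell,\chi}(x) \ll x^{(1-k)/\ell + \epsilon}$ as $x \to \infty$, which is obtained by splitting the defining series at $n \approx x^{1/\ell}$. Together these exhibit the left-hand side as meromorphic in the wider strip with the same simple pole at $s = 0$, and the identity principle extends the equality from the sub-strip.

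The main obstacle is the boundary case $k = 1$, where the sub-strip of absolute convergence degenerates. The cleanest way around it is to invoke the Mellin--Barnes representation
\begin{equation*}
P_{k,\ell,\chi}(x) = \frac{1}{2\pi i} \int_{(c)} \frac{\Gamma(w)\, x^{-w}}{L(k - \ell w, \chi)} \, dw, \qquad 0 < c < \tfrac{k-1}{\ell},
\end{equation*}
and read off the Mellin transform via Mellin inversion; shifting the contour past $w = 0$ captures the constant term $1/L(k,\chi)$ and delivers the meromorphic continuation of the Mellin transform through the entire strip. Equivalently, one can prove the identity for $k > 1$ first and take the limit $k \to 1^+$.
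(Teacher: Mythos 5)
Your argument is correct where it is rigorous, but it takes a genuinely different route from the paper's. The paper never integrates term by term against the Dirichlet series of $1/L$: instead it multiplies the Mellin integral by $L(\ell s+k,\chi)$, rescales $x\mapsto x/n^{\ell}$, and uses the Riesz-style collapse $\sum_{n\ge 1}\chi(n)n^{-k}P_{k,\ell,\chi}(x/n^{\ell})=e^{-x}$ (obtained from the power-series form $P_{k,\ell,\chi}(x)=\sum_{m\ge 0}(-x)^m/(m!\,L(k+\ell m,\chi))$), so the transform is read off as $\Gamma(-s)$ for $\Re(s)<0$, and the extension to the stated strip is dispatched in one line by "analytic continuation of $\Gamma(-s)$". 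Your version --- Fubini on the substrip $(1-k)/\ell<\Re(s)<0$ with the majorant $\Gamma(-\Re(s))\,\zeta(k+\ell\Re(s))$, then continuation of the left-hand side by splitting at $x=1$, subtracting the constant $1/L(k,\chi)$ to expose the simple pole at $s=0$, and the unconditional bound $P_{k,\ell,\chi}(x)\ll x^{(1-k)/\ell+\epsilon}$ for the tail --- is more standard and is in fact more careful than the paper on the continuation step: it makes explicit that for $\Re(s)\ge 0$ the integral itself diverges at $x=0$, so the identity there must be read as equality of analytic continuations, a point the paper glosses over. What the paper's trick buys is that one never needs the expansion $1/L(w,\chi)=\sum_n \mu(n)\chi(n)n^{-w}$ explicitly; what your route buys is a transparent justification of every interchange and of the continuation.

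The one substantive caveat is the boundary case $k=1$, which you flag but do not actually repair: your Mellin--Barnes contour requires $0<c<(k-1)/\ell$, which is empty precisely when $k=1$ (the same degeneracy you were trying to avoid), and "let $k\to 1^{+}$" needs uniform control that you do not supply --- note that for $k=1$ even the convergence of the defining series is only conditional and already relies on cancellation in $\chi(n)\mu(n)$. You are in good company: the paper's own proof has the identical defect, since its manipulations need both $\Re(s)>(1-k)/\ell$ (absolute convergence of the Dirichlet series it multiplies by) and $\Re(s)<0$ (to produce $\Gamma(-s)$), a region that is empty at $k=1$, and the paper does not comment. So your proposal matches the paper's level of rigor for $k>1$ and, like the paper, leaves $k=1$ genuinely unaddressed.
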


\begin{proof}
Making use of the definition of $P_{k,  \ell , \chi}(x)$,  we see that
\begin{align}\label{another form_P_k(x)}
P_{k,  \ell , \chi}(x) = \sum_{n=1}^{\infty} \frac{\chi(n) \mu(n)}{n^{k}} \exp \left({-\frac{x}{n^{\ell}}}\right) & = \sum_{n=1}^{\infty} \frac{\chi(n) \mu(n)}{n^{k}} \sum_{m=0}^\infty \frac{(-1)^m  x^{m} }{m! n^{\ell m} } \nonumber \\
& = \sum_{m=0}^\infty \frac{(-1)^m  x^{m} }{m!  L(k + \ell m, \chi) } .
\end{align}
For $\Re(s) > \frac{1-k}{ \ell}$,  utilizing the series representation of $L(\ell s +k,  \chi)$,  one can  write
\begin{align}\label{Mellin transform of P_k(x)}
L(\ell s+k, \chi) \int_{0}^{\infty} x^{-s-1} P_{k, \ell, \chi}(x) {\rm d}x  & = \sum_{n=1}^{\infty} \frac{\chi(n) \mu(n) }{n^{k}} \int_{0}^{\infty}  \frac{x^{-s-1}}{n^{\ell s}} P_{k, \ell, \chi}(x) {\rm d}x  \nonumber \\
& = \sum_{n=1}^{\infty} \frac{\chi(n) \mu(n) }{n^{k}}  \int_{0}^{\infty}  x^{-s-1} P_{k, \ell, \chi}\left(\frac{x}{n^{\ell}} \right) {\rm d}x \nonumber \\
& =  \int_{0}^{\infty}  x^{-s-1} \sum_{n=1}^{\infty} \frac{\chi(n) \mu(n) }{n^{k}} P_{k, \ell, \chi}\left(\frac{x}{n^{\ell}} \right) {\rm d}x.
\end{align}
In the penultimate step,  we replaced $x$ by $x/n^{\ell}$.  
Now we shall try to simplify the infinite series present in the above equation.  
Employ \eqref{another form_P_k(x)} to see that
\begin{align}\label{exp(-x)}
\sum_{n=1}^{\infty} \frac{\chi(n)}{n^{k}} P_{k , \ell, \chi}\left(\frac{x}{n^{\ell}} \right)  =  \sum_{n=1}^{\infty} \frac{\chi(n)}{n^{k}}   \sum_{m=0}^\infty \frac{(-1)^m  x^{m} }{m! n^{\ell m}  L(k+\ell m, \chi) } 
 =\sum_{m=0}^\infty \frac{(-1)^m  x^{m} }{m!} = e^{-x}.
\end{align}
At this situation,  combining \eqref{exp(-x)} and \eqref{Mellin transform of P_k(x)},  we arrive at 
\begin{align*}
L(\ell s+k, \chi) \int_{0}^{\infty} x^{-s-1} P_{k , \ell,  \chi}(x) {\rm d}x =  \int_{0}^{\infty}  x^{-s-1} e^{- x} {\rm d} x  =  \Gamma(-s),
\end{align*}
for $\Re(s)<0$.  Therefore,  analytic continuation of $\Gamma(-s)$ implies that the above identity can  be further extended to the right half plane except  at $s=0$ and positive integers. 

\end{proof}

Next, we  record Euler's summation formula,  which will play a crucial role for finding the equivalent criteria for the generalized Riemann hypothesis.  
 \begin{lemma} \label{Euler's summation}
 Let $\{ a_n\}$ be a sequence of complex numbers and $f(t)$ be a continuously differentiable function on $[1,x]$.  Consider $A(x):= \sum_{1 \leq n \leq x} a_n$.  Then we have
 \begin{align*}
 \sum_{ 1\leq n \leq x} a_n f(n) = A(x) f(x) - \int_{1}^{x} A(t) f'(t) {\rm d}t.
 \end{align*}
 \end{lemma}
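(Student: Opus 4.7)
The plan is to establish the formula by the classical Abel summation-by-parts argument, then interpolate the resulting discrete identity against an integral via the fundamental theorem of calculus, and finally extend from an integer endpoint to an arbitrary real $x$.

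First, I would set $A(0) := 0$ so that $a_n = A(n) - A(n-1)$ for every $n \geq 1$, and write $N := \lfloor x \rfloor$. Substituting this difference expression into $\sum_{n=1}^{N} a_n f(n)$, splitting into two sums, shifting the index $n \mapsto n+1$ in the sum involving $A(n-1)$, and isolating the highest term $A(N) f(N)$ yields the discrete summation-by-parts identity
\begin{align*}
\sum_{n=1}^{N} a_n f(n) = A(N) f(N) - \sum_{n=1}^{N-1} A(n)\bigl(f(n+1) - f(n)\bigr).
\end{align*}

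Next, I would convert the telescoping differences into integrals. Since $f$ is continuously differentiable on $[1,x]$, the fundamental theorem of calculus gives $f(n+1) - f(n) = \int_{n}^{n+1} f'(t)\,dt$. Because $A(t)$ is a step function, constant and equal to $A(n)$ on $[n, n+1)$, one may legitimately replace the constant factor $A(n)$ by $A(t)$ inside that integral, and the sum of integrals over consecutive unit intervals amalgamates into a single integral from $1$ to $N$:
\begin{align*}
\sum_{n=1}^{N-1} A(n)\bigl(f(n+1) - f(n)\bigr) = \int_{1}^{N} A(t) f'(t)\,dt.
\end{align*}

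Finally, I would promote the integer endpoint $N$ to the real endpoint $x$. On the interval $[N, x]$ the function $A$ is constant, equal to $A(N) = A(x)$, so $\int_{N}^{x} A(t) f'(t)\,dt = A(N)\bigl(f(x) - f(N)\bigr)$. Adding this quantity to both $A(N) f(N)$ and $\int_{1}^{N} A(t) f'(t)\,dt$ shows that the identity obtained at $N$ is equivalent to the claimed identity at $x$. There is no substantive obstacle here; the only point requiring mild care is bookkeeping at the endpoints, namely the convention $A(0) = 0$ that avoids a boundary term at $n=1$, and the verification that the piecewise-constant representation of $A$ makes the transition from sum to integral exact rather than merely approximate.
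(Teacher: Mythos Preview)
Your argument is correct: it is the standard Abel summation-by-parts derivation, and the bookkeeping at the endpoints (the convention $A(0)=0$ and the passage from $N=\lfloor x\rfloor$ to $x$) is handled cleanly. The paper does not actually supply its own proof of this lemma but merely cites \cite[p.~17]{Murty}; the proof found there is exactly the one you have written, so there is nothing further to compare.
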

\begin{proof}
Proof of this result can be found in \cite[p.~17]{Murty}
\end{proof}

Here we state the Meijer $G$-function \cite[p.~415, Definition 16.17]{NIST},  an important special function.  In particular,  it reduces to many well-known special functions.  
Let $m,n,p,q$ be non-negative integers such that $0\leq m \leq q$, $0\leq n \leq p$.  Let $a_1, \cdots, a_p$ and $b_1, \cdots, b_q$ be complex numbers along with $a_i - b_j \not\in \mathbb{N}$ for $1 \leq i \leq n$ and $1 \leq j \leq m$.  The Meijer $G$-function is defined by the following line integral: 
\begin{align}\label{Meijer-G}
G_{p,q}^{\,m,n} \!\left(  \,\begin{matrix} a_1,\cdots , a_p \\ b_1, \cdots , b_q \end{matrix} \; \Big| z   \right) := \frac{1}{2 \pi i} \int_L \frac{\prod_{j=1}^m \Gamma(b_j - s) \prod_{j=1}^n \Gamma(1 - a_j +s) z^s  } {\prod_{j=m+1}^q \Gamma(1 - b_j + s) \prod_{j=n+1}^p \Gamma(a_j - s)}\mathrm{d}s,
\end{align}
where the line of integration $L$ goes from $-i \infty$ to $+i \infty$, which  separates the poles of the factors $\Gamma(1-a_j+s)$  from those of the factors $\Gamma(b_j-s)$.   The above integral converges if we have $p+q < 2(m+n)$ and $|\arg(z)| < \left(m+n - \frac{p+q}{2} \right) \pi$. 

Next,  we write Slater's theorem \cite[p.~415, Equation 16.17.2]{NIST}, which tells us that the Meijer $G$-function can be written in terms of generalized hypergeometric functions. 
If $p \leq q$ and $ b_j - b_k \not\in \mathbb{Z}$ for $j\neq k$, $1 \leq j, k \leq m$, then 
\begin{align}\label{Slater}
& G_{p,q}^{\,m,n} \!\left(   \,\begin{matrix} a_1, \cdots , a_p \\ b_1, \cdots , b_q \end{matrix} \; \Big| z   \right)  \\
& \quad = \sum_{k=1}^{m} A_{p,q,k}^{m,n}(z) {}_p F_{q-1} \left(  \begin{matrix}
1+b_k - a_1,\cdots, 1+ b_k - a_p \\
1+ b_k - b_1, \cdots, *, \cdots, 1 + b_k - b_q 
\end{matrix} \Big| (-1)^{p-m-n} z  \right), \nonumber 
\end{align}
where $*$ means that the entry $1 + b_k - b_k$ is removed and 
\begin{align*}
A_{p,q,k}^{m,n}(z) := \frac{ z^{b_k}  \prod_{ j=1,  j\neq k}^{m} \Gamma(b_j - b_k ) \prod_{j=1}^n   \Gamma( 1 + b_k -a_j ) }{ \prod_{j=m+1}^{q} \Gamma(1 + b_k - b_{j}) \prod_{j=n+1}^{p} \Gamma(a_{j} - b_k)  }.
\end{align*}

 \section{Proof of the main results}\label{proof_main results}
 
 \begin{proof}[Theorem \ref{character_analogue_AGM}][]
The inverse Mellin transform of $\Gamma(s)$ suggests that,   for $-1 < c <0$,  one can obtain
 \begin{align}\label{inverse_Mellin}
e^{-x} - 1 = \frac{1}{2 \pi i}  \int_{c-i\infty}^{c+i \infty}\Gamma(s) x^{-s} {\rm d}s.  
\end{align}
Let $L(s,  \chi)$ be the Dirichlet $L$-function associated to the primitive Dirichlet character $\chi$ modulo $q$.  Given that $x$ is a positive real number and letting $k \geq 1$ be any real number,  we can write by making use of \eqref{inverse_Mellin},
\begin{align}
     \sum_{n=1}^{\infty} \frac{\chi(n) \mu(n)}{n^k} \exp \left({-\frac{\pi x^2}{q n^2}}\right)&=\sum_{n=1}^{\infty} \frac{\chi(n) \mu(n)}{n^k} + \sum_{n=1}^{\infty} \frac{\chi(n) \mu(n)}{n^k} \left(\exp\left({-\frac{\pi x^2}{q n^2}}\right) -1 \right) \nonumber \\
     &= \frac{1}{L(k,  \chi) }+\sum_{n=1}^{\infty} \frac{\chi(n) \mu(n)}{n^k}\frac{1}{2\pi i} \int_{c-i\infty}^{c+i \infty}\Gamma(s) \left(\frac{\pi x^2}{q n^2}\right)^{-s} {\rm d}s  \nonumber \\
     & =  \frac{1}{L(k,  \chi) } +\frac{1}{2\pi i}\int_{c-i \infty}^{c+i \infty}\frac{\Gamma(s)}{L(k-2s, \chi)} \left(\frac{\pi x^2}{q}\right)^{-s} {\rm d}. \label{main equation}
\end{align}
 Here,  change in the order of summation and integration in the final step, is valid due to absolute convergence of the infinite series,  since $\Re(k-2s) >1$.  We must try to analyze the following vertical line integral: 
\begin{align}\label{main_line_integration}
V_{k, \chi}^{(1)}(x):= \frac{1}{2\pi i}\int_{c-i \infty}^{c+i \infty} \frac{\Gamma(s)}{L(k-2s, \chi)} \left(\frac{\pi x^2}{q}\right)^{-s}  {\rm d}s. 
\end{align}
At first,  we shall find the poles of the integrand.  We know that $\Gamma(s)$ has simple poles at non-positive integers.  Again,  trivial zeros of $L(k-2s,  \chi)$ will also contribute to the simple poles of the integrand.  Employing the functional equation of $L(s,  \chi)$,  one can detect that the trivial zeros of $L(k-2s,  \chi)$ are at $\frac{k+a+2n}{2}$ for any $n \in \mathbb{N}\cup \{ 0\}$,  where the constant $a$ is defined as in \eqref{defn_a}.  Another crucial observation is that there will be infinitely poles of  the integrand due to the non-trivial zeros of $L(k-2s, \chi)$  in the critical strip $\frac{k - 1}{2}  < \Re(s) < \frac{k}{2}$.  As we are interested to evaluate the contributions of the poles corresponding to these non-trivial zeros of $L(k-2s, \chi)$,   we must shift the line of integration $\Re(s)=c$ to $\Re(s)= d$ with $ \frac{k+1}{2} < d < \frac{k}{2} + 1$.  Mainly,  we construct a rectangular contour $\mathfrak{C}$ with corner points $c\pm i T,  d \pm i T$,  where $T$ is some large positive real number.  Now,  making use of Cauchy's residue theorem,  we see that
\begin{align}\label{CRT_application}
    \frac{1}{2\pi i}\int_{\mathfrak{C}}\frac{  \Gamma(s)  }{L(k-2s, \chi) }  \left(\frac{\pi x^2}{q}\right)^{-s} {\rm d}s = R_{0} + R_{\frac{k+a}{2}} + \mathcal{R}_{T, \chi}(x),
\end{align}
 where $R_0$ and $R_{\frac{k+a}{2}}$ denote the residual terms corresponding to $s=0$ and $s= \frac{k+a}{2}$ respectively,  and the residual term $\mathcal{R}_{T, \chi}(x)$ corresponds to the non-trivial zeros $\rho$ of $L(k-2s,  \chi)$ with $|\Im(\rho)| < T$.  
 Without much effort,  we can evaluate residual terms $R_0$ and $R_{\frac{k+a}{2}}$.  One can find out that
 \begin{align}\label{Residue_first_second}
 R_0  = \frac{1}{L(k, \chi)},  \quad
  R_{\frac{k+a}{2}}  = - \frac{\Gamma\left( \frac{k+a}{2} \right) \left( \frac{\pi x^2}{q}  \right)^{-\frac{k+a}{2}}}{2 L'(-a, \chi)}.  
 \end{align}
Again,  if we assume that all the non-trivial zeros of $L(s, \chi)$ are simple,  then we have
\begin{align}\label{infinite_residual term}
\mathcal{R}_{T, \chi}(x)= \sum_{ |\Im(\rho)| < T }\, \lim_{ s \rightarrow \frac{k-\rho}{2}} \frac{ \left( s-  \frac{k-\rho}{2} \right) \Gamma(s)   }{L(k-2s, \chi)  } \left(\frac{\pi x^2}{q}\right)^{-s} = -\frac{1}{2}  \sum_{ | \Im( \rho)|<T } \frac{\Gamma(\frac{k-\rho}{2})}{ L'(\rho , \chi)} \left(\frac{\pi x^2}{q}\right)^{-\frac{k-\rho}{2}},
\end{align}
where the sum runs through all the non-trivial zeros of $L(s, \chi)$.  Now one of our main goals is to show that the following horizontal integrals 
\begin{align*}
H_{T, \chi}^{(1)}(x) & :=\frac{1}{2\pi i}\int_{d+ iT }^{c+i T} \frac{\Gamma(s)}{L(k-2s, \chi)} \left(\frac{\pi x^2}{q}\right)^{-s}  {\rm d}s,  \\
  H_{T, \chi}^{(2)}(x) & := \frac{1}{2\pi i}\int_{c-i T}^{d - i T} \frac{\Gamma(s)}{L(k-2s, \chi)} \left(\frac{\pi x^2}{q}\right)^{-s}  {\rm d}s.
\end{align*}
will  vanish as $T \rightarrow \infty$.  Now,  we shall make use of Stirling's formula for $\Gamma(s)$,   that is,  for $c \leq \sigma \leq d$,  
\begin{align*}
|\Gamma(\sigma + i T) \ll |T|^{\sigma - \frac{1}{2}} e^{- \frac{\pi}{2}|T|},  \quad \textrm{as} \,\, |T| \rightarrow \infty, 
\end{align*}
 and together with the bound for $1/L(s, \chi)$, i.e.,  using Lemma \ref{bound_1by_L(s,chi)},  one can see that
\begin{align}
| H_{T, \chi}^{(1)}(x)  | \ll |T|^{\sigma - \frac{1}{2}} e^{2 A_1 T- \frac{\pi}{2}|T|} ,
\end{align}
where $0 < A_1< \pi/4$.  This immediately implies that the horizontal integral vanishes as $T\rightarrow \infty$.  Similarly,  one can show that the same holds for the other horizontal integral  $H_{T, \chi}^{(2)}(x)$.  Thus  letting $T \rightarrow \infty$ in \eqref{CRT_application} and considering the fact that the horizontal integrals tend to zero and in view of \eqref{main equation} and \eqref{main_line_integration},  we reach 
\begin{align}\label{Using CRT}
     \sum_{n=1}^{\infty} \frac{\chi(n) \mu(n)}{n^k} \exp \left({-\frac{\pi x^2}{q n^2}}\right)  = V_{k, \chi}^{(2)}(x) - R_{\frac{k+a}{2}} - \mathcal{R}_{\chi}(x),  
\end{align}
where the residual term $\mathcal{R}_{\chi}(x)$ contains the contribution of all the non-trivial zeros of $L(s, \chi)$,  that is, 
 \begin{align}\label{infinite_residual_term}
\mathcal{R}_{\chi}(x) = -\frac{1}{2}  \sum_{ \rho } \frac{\Gamma(\frac{k-\rho}{2})}{ L'(\rho , \chi)} \left(\frac{\pi x^2}{q}\right)^{-\frac{k-\rho}{2}},
\end{align}
 and the right vertical integral is denoted by 
 \begin{align}\label{right vertical integral}
 V_{k, \chi}^{(2)}(x) := \frac{1}{2\pi i}\int_{d - i\infty }^{d+i \infty} \frac{\Gamma(s)}{L(k-2s, \chi)} \left(\frac{\pi x^2}{q}\right)^{-s}  {\rm d}s,
 \end{align}
 where $ \frac{k+1}{2} < d < \frac{k}{2} + 1$.  At this juncture,  we shall try to simplify the above integral.  Here we make use of the functional equation \eqref{functional_Dirichlet-L} of $ L(s, \chi)$. 
Replace $s$ by $k-2s$ in \eqref{functional_Dirichlet-L} and simplify to see that
\begin{align}\label{Zeta (k-2s)}
    \frac{1}{ L(k-2s, \chi)} = \frac{i^a}{G(\chi)} \frac{\Big(\frac{q}{\pi} \Big)^{k-2s} \Gamma\left(\frac{k + a}{2} - s \right)}{\Gamma\left(\frac{1+a-k}{2} + s \right) L(1-k+2s, \overline{\chi})}.
\end{align}
Putting \eqref{Zeta (k-2s)} in \eqref{right vertical integral},  we obtain 
\begin{align}\label{second form_V(x,k)}
V_{k, \chi}^{(2)}(x) = \frac{1}{\epsilon(\chi)}  \left(\frac{q}{\pi}  \right)^{k- \frac{1}{2} }  \frac{1}{2 \pi i }   \int_{d - i \infty}^{d +i \infty}   \frac{\Gamma(s) \Gamma(\frac{k+a}{2} - s)}{\Gamma(\frac{1+a-k}{2}+s)} \frac{ \left( \frac{q x^2}{\pi}  \right)^{-s}}{L(1-k+2s, \overline{\chi})}{\rm d}s, 
\end{align}
where $\epsilon(\chi) = \frac{G(\chi)}{i^a \sqrt{q}}$.  At this point,  one can verify that the argument of $L(1-k+2s, \bar{\chi})$ lies in the interval $2<\Re(1-k+2s)<3 $ since $\frac{k+1}{2} < \Re(s)= d < \frac{k}{2}+1$.  Thus,  we can write 
\begin{align}\label{series_L}
\frac{1}{L(1-k+2s, \overline{\chi})} = \sum_{n=1}^{\infty} \frac{\mu(n) \overline{\chi(n)} }{n^{1-k+2s}}.
\end{align}
Substituting the expression \eqref{series_L} in  \eqref{second form_V(x,k)} and then interchanging the integration and summation,  equation \eqref{second form_V(x,k)} takes the shape 
 \begin{align}\label{thrid form_V(x,k)}
V_{k, \chi}^{(2)}(x)  = \frac{1}{\epsilon(\chi)}  \Big(\frac{q}{\pi} \Big)^{k-\frac{1}{2}}  \sum_{n=1}^\infty  \frac{ \mu(n) \overline{\chi(n)} }{ n^{1-k}}  J(X_{n,q}, k, a),
\end{align}
 where 
\begin{align}\label{I(X_n, k)}
J(X_{n,q}, k, a):=  \frac{1}{2 \pi i } \int_{d - i \infty}^{d +i \infty}   \frac{ \Gamma(s) \Gamma\left(\frac{k+a}{2}-s \right)}{\Gamma\left(\frac{1+a-k}{2} +s \right) }  X_{n,q}^{-s} {\rm d}s,
\end{align}
and $X_{n,q} := \frac{q\,  (n x)^2}{\pi}$.  At this moment,  our final attempt is to simplify the above integral and write it in terms of well known functions.  Here we shall use the definition \eqref{Meijer-G} of the Meijer $G$-function.  Evaluating the poles of $\Gamma(s)$ and $\Gamma\left(\frac{k+a}{2}-s \right)$,  one can verify that the line of integration $\Re(s)=d$ does not separate the poles of $\Gamma\left(\frac{k+a}{2}-s \right)$ from the poles of $\Gamma(s)$ since $ \frac{k+1}{2} < d < \frac{k}{2}+1$. 
So, we draw a new line of integration $\Re(s)=d_1$ with $0< d_1 < \frac{k}{2}$.  Now we can easily check that this new line  $\Re(s)=d_1$  does separate the poles of $\Gamma\left(\frac{k+a}{2}-s \right)$ from the poles of $\Gamma(s)$.  Once again,  considering a rectangular contour with corners $d\pm i T,  d_1 \pm i T$,  we  apply Cauchy's residue theorem.   Letting $T \rightarrow \infty$ and simplifying,  we show that 
\begin{align}
J(X_{n,q}, k, a) & = \frac{1}{2 \pi i } \int_{d_1 - i \infty}^{d_1 +i \infty}   \frac{ \Gamma(s) \Gamma\left(\frac{k+a}{2}-s \right)}{\Gamma\left(\frac{1+a-k}{2} +s \right) }  X_{n,q}^{-s} {\rm d}s + \mathrm{Res}_{{s=\frac{k+a}{2}}}  \frac{ \Gamma(s) \Gamma\left(\frac{k+a}{2}-s \right)}{\Gamma\left(\frac{1+a-k}{2} +s \right) }  X_{n,q}^{-s} \nonumber \\
& =\frac{1}{2 \pi i } \int_{d_1 - i \infty}^{d_1 +i \infty}   \frac{ \Gamma(s) \Gamma\left(\frac{k+a}{2}-s \right)}{\Gamma\left(\frac{1+a-k}{2} +s \right) }  X_{n,q}^{-s} {\rm d}s - \frac{ \Gamma\left(\frac{k+a}{2} \right)}{\Gamma\left(a + \frac{1}{2} \right) }  X_{n,q}^{-\frac{k+a}{2}}.  \label{another_right vertical} 
\end{align}
 As the line of integration $\Re(s)= d_1$ separates the poles of $\Gamma(s)$ from the poles of $\Gamma\left(\frac{k+a}{2}-s \right)$,  we use  the definition \eqref{Meijer-G} of the Meijer $G$-function,  with $m=n=p=1,  q=2$,  and $a_1=1,  b_1= \frac{k+a}{2},  b_2= \frac{1-a+k}{2}$.   Thus,  we can write
\begin{align}\label{in terms_Meijer-G}
\frac{1}{2 \pi i}   \int_{d_1 - i \infty}^{d_1 + i \infty}    \frac{ \Gamma(s) \Gamma\left(\frac{k+a}{2}-s \right)}{\Gamma\left(\frac{1+a-k}{2} +s \right) }  X_{n,q}^{-s} {\rm d}s =  G_{1,2}^{1,1} \left(\begin{matrix} 1 \\ \frac{k+a}{2},\frac{1-a+k}{2} \end{matrix} \Big| \frac{1}{X_{n,q}}\right).
\end{align}
 Note that $b_1 - b_2 = a - \frac{1}{2} \not\in \mathbb{Z}$,  so we can employ Slater's theorem \eqref{Slater}.  Therefore,  using Slater's theorem \eqref{Slater},  we get 
 \begin{align}\label{MeijerG_1F1}
G_{1,2}^{1,1} \left(\begin{matrix} 1 \\ \frac{k+a}{2},\frac{1-a+k}{2} \end{matrix} \Big| \frac{1}{X_{n,q}}\right) = \frac{\Gamma\left( \frac{k+a}{2} \right)}{X_{n,q}^{ \frac{k+a}{2}} \Gamma(a+\frac{1}{2})} {}_1F_{1} \left( \frac{k+a}{2}; a+\frac{1}{2}; - \frac{1}{X_{n,q}} \right).
\end{align}
Substituting \eqref{MeijerG_1F1} in \eqref{in terms_Meijer-G} and in view of \eqref{another_right vertical}, \eqref{I(X_n, k)} and \eqref{thrid form_V(x,k)},  one can see that the final expression of the right vertical integral becomes 
\begin{align*}
V_{k, \chi}^{(2)}(x) & =   \frac{1}{\epsilon(\chi)} \Big(\frac{q}{\pi} \Big)^{k- \frac{1}{2}}  \frac{\Gamma\left( \frac{k+a}{2} \right)}{\Gamma(a+\frac{1}{2})}  \sum_{n=1}^\infty  \frac{ \mu(n) \overline{\chi(n)}}{ n^{1-k} X_{n,q}^{ \frac{k+a}{2}} } \Bigg( {}_1F_{1} \left( \frac{k+a}{2}; a+\frac{1}{2}; - \frac{1}{X_{n,q}} \right) -1  \Bigg) \\
& = \frac{i^a \sqrt{q}}{G(\chi) x^{k+a}} \Big(\frac{q}{\pi} \Big)^{\frac{k-1-a}{2}}  \frac{\Gamma\left( \frac{k+a}{2} \right)}{\Gamma(a+\frac{1}{2})}  \sum_{n=1}^\infty  \frac{ \mu(n) \overline{\chi(n)}}{ n^{1+a}} \Bigg( {}_1F_{1} \left( \frac{k+a}{2}; a+\frac{1}{2}; - \frac{\pi}{q (nx)^2 } \right) -1  \Bigg),
\end{align*}
where in the last step we have substituted $\epsilon(\chi)= \frac{G(\chi)}{i^a \sqrt{q}}$ and  $X_{n, q}= \frac{q(nx)^2}{\pi}$.  
Now we employ Lemma \ref{Derivative_L} to simplify further.  Replace $m$ by $a+2$ in Lemma \ref{Derivative_L} to get
\begin{align}
    \frac{(a)! q^{a} G(\chi)}{2^{a+1} \pi^{a}i^{a}} L(1+a, \bar{\chi}) = L'(-a, \chi).
\end{align}
Substituting this expression and simplifying, we reach 
\begin{align}
V_{k, \chi}^{(2)}(x) &  = \frac{i^a \sqrt{q}}{G(\chi)} \bigg(\frac{q}{\pi x^2}\bigg)^{\frac{k+a}{2}} \bigg( \frac{\pi}{q} \bigg)^{a+\frac{1}{2}} \frac{\Gamma(\frac{k+a}{2})}{\Gamma(a+\frac{1}{2})}  \sum_{n=1}^{\infty} \frac{\overline{\chi(n)} \mu(n)}{n^{1+a}} {}_1F_{1} \left( \frac{k+a}{2}; a+\frac{1}{2}; - \frac{\pi}{qn^2 x^2} \right)  \nonumber \\
&  \hspace{8cm} -  \frac{\Gamma\left( \frac{k+a}{2} \right) \left( \frac{\pi x^2}{q}  \right)^{-\frac{k+a}{2}}}{2 L'(-a, \chi)}. \label{Final_2nd Vertical}
\end{align}
The last term in the above expression will be cancelled with the residual term $R_{\frac{k+a}{2}}$ present in \eqref{Residue_first_second}.  Thus,  finally substituting \eqref{Final_2nd Vertical} in \eqref{Using CRT} and together with residual terms \eqref{Residue_first_second} and \eqref{infinite_residual_term},  we complete the proof of Theorem \ref{character_analogue_AGM}.

\end{proof}

\begin{proof}[Theorem {\rm \ref{Garg_Maji bound}}][] 

Firstly,  assuming the bound \eqref{GM_bound} for  $P_{k, \ell, \chi}(x)$,  we shall show that all the non-trivial zeros of $L(s, \chi)$ lie on $\Re(s)= 1/2$.  Invoking Lemma \ref{Mellin_P_two variable},  one has
\begin{align}\label{Lemma_Mellin_P_k(x)}
 L(\ell s +k, \chi) \int_{0}^{\infty} x^{-s-1} P_{k, \ell, \chi}(x) {\rm d}x =  \Gamma(-s), 
\end{align}
valid in the region $\frac{1-k}{\ell} <\Re(s) < 1$, except at $s=0$.  Now we shall attempt to extend the region of validity of the above identity \eqref{Lemma_Mellin_P_k(x)} on the left half plane,  mainly,  in the region $\Re(s)> \frac{1}{2 \ell}-\frac{k}{\ell}$.  We choose a large positive real number $R$,  and then write 
\begin{align*}
  L(\ell s+k, \chi) \left( \int_{0}^{R} +  \int_{R}^{\infty}  \right) x^{-s-1}  P_{k, \ell, \chi}(x) {\rm d}x = \Gamma(-s).  
\end{align*}
Utilizing the bound \eqref{GM_bound} for $P_{k, \ell, \chi}(x)$,  we can clearly see that the unbounded part is analytic in the domain $\Re(s)> \frac{1}{2 \ell}-\frac{k}{\ell}$,  whereas using a trivial bound for $P_{k, \ell, \chi}(x)$,  one can show that the finite part is analytic for $\Re(s)<0$.  Therefore,  in particular,  the identity \eqref{Lemma_Mellin_P_k(x)} is analytic in the strip $ \frac{1}{2\ell}-\frac{k}{\ell} <\Re(s) < \frac{1-k}{\ell}$.  We know that $\Gamma(s)$ never vanishes,  which indicates that 
$L(\ell s+k, \chi)$ has no zero in the strip $ \frac{1}{2\ell}-\frac{k}{\ell} <\Re(s) < \frac{1-k}{\ell}$.  This is equivalent to saying that $L(s, \chi)$ does not vanish in the strip $ \frac{1}{2}< \Re(s) <1$ and thus the functional equation of $L(s, \chi)$ suggests that it has no zero in the strip $0< \Re(s) <\frac{1}{2}$.  This proves that all the non-trivial zeros of $L(s, \chi)$ will lie on the critical line $\Re(s)=1/2$. 

Now we shall try to show the converse part, that is, we assume that the generalized Riemann hypothesis for $L(s, \chi)$ is true.  In \cite[Proposition 5.14]{IK},  we find that the generalized Riemann hypothesis is equivalent to the following bound,  that is,  for any $\epsilon >0$, 
\begin{align}\label{summatory_Mobius}
S(x):= \sum_{1 \leq n \leq x} \chi(n) \mu(n) = O_{\epsilon} \left(x^{\frac{1}{2}+\epsilon} \right). 
\end{align}
This can be proved along the lines given in Titchmarch \cite[370]{Tit}. 
Now we employ Euler's partial summation formula,  i.e.,  Lemma \ref{Euler's summation} with $a_n = \chi(n) \mu(n)$ and $f(t)= t^{-k}$,  to see that
\begin{align}\label{T(m,n)}
T(m;n) := \sum_{j=m}^{n} \frac{\chi(j) \mu(j)}{j^{k}}  & =  S(n) f(n)- S(m-1) f(m-1) - \int_{m-1}^n S(t) f'(t) {\rm d}t.
\end{align}
Substituting \eqref{summatory_Mobius} in \eqref{T(m,n)},  we can readily see that
\begin{align}\label{bound for T(m,n)}
T(m;n) = O_{\epsilon,k}\left(m^{\frac{1}{2}-k+\epsilon}\right), 
\end{align}
uniformly in $n$.  We are interested to find the bound for the following infinite series 
\begin{align*}
P_{k, \ell, \chi}(x)= \sum_{n=1}^{\infty} \frac{\chi(n) \mu(n)}{n^{k}} \exp\left({-\frac{ x}{ n^{\ell}}}\right).
\end{align*}
Now for the sake of simplicity,  we replace $x$ by $x^\ell$ and divide the sum into two parts,  that is,  
\begin{align}\label{P_k(x^2) interms of S_1 and S_2}
P_{k, \ell, \chi}(x^{\ell})   := Y_1(x^{\ell}) + Y_2(x^{\ell}), 
\end{align}
where 
\begin{align}
Y_1(x^{\ell}) := \sum_{n=1}^{m-1}  \frac{\chi(n) \mu(n)}{n^{k}}  \exp\left({-\frac{ x^{\ell}}{ n^{\ell}}}\right),   \quad
 Y_2(x^{\ell})=  \sum_{n=m}^{\infty}  \frac{\chi(n) \mu(n)}{n^{k}} \exp\left({-\frac{ x^{\ell}}{ n^{\ell}}}\right),  \nonumber 
 \end{align}
and $m = [x^{1-\epsilon}]+1$.  First,  we trivially bound $Y_1(x^{\ell})$.  By using $|\chi(n)\mu(n)| \leq 1$,  one has
\begin{align*}
|Y_1(x^{\ell})| & 
\leq  \sum_{n=1}^{m-1} \exp\left({-\frac{x^{\ell}}{ m^{\ell}}}\right). 
\end{align*} 
As $m = [x^{1-\epsilon}] + 1$,  it follows that 
\begin{align}\label{final bound of 
Y_1}
Y_1(x^{\ell}) = O \left( x^{1-\epsilon} \exp( - x^{\ell \epsilon} ) \right). 
\end{align}
Now we shall concentrate on the  evaluation of the bound for $Y_2(x^{\ell})$.  
 Utilizing the definition \eqref{T(m,n)} of $T(m;n)$,   we  derive that, for any integer $N > m$, 
\begin{align}\label{sum upto N}
\sum_{n=m}^{N} \frac{\chi(n) \mu(n)}{n^{k}} \exp\left({-\frac{ x^{\ell}}{ n^{\ell}}}\right) 
 & =  \sum_{n=m}^{N-1} T(m;n) \left[  \exp\left( {-\frac{ x^{\ell}}{ n^{\ell}}} \right)- \exp\left( -\frac{ x^{\ell}}{ (n+1)^{\ell}} \right) \right] \nonumber  \\
& + T(m; N) \exp \left( - \frac{ x^{\ell}}{ N^{\ell}} \right) .
\end{align}
Allowing $N \rightarrow \infty$ in the above equation and invoking the bound \eqref{bound for T(m,n)} for $T(m,n)$,  one can deduce that 
\begin{align}
Y_2(x^{\ell})= Z(x^{\ell}) + O_{\epsilon,k}\left(m^{\frac{1}{2}-k +\epsilon} \right),  \label{Y_2}
\end{align}
where 
\begin{align*}
Z(x^{\ell}) & :=  \sum_{n=m}^{\infty} T(m;n) \left[  \exp\left( {-\frac{ x^{\ell}}{ n^{\ell}}} \right)- \exp\left( -\frac{ x^{\ell}}{ (n+1)^{\ell}} \right) \right] \\
& = \sum_{n=m}^\infty T(m; n) \left[ F(n) - F(n+1)\right],  
\end{align*}
with $F(y):= \exp \left( -\frac{ x^{\ell}}{y^{\ell}} \right)$.  To simplify further,  we make use of mean value theorem.  We write $ F(n+1) - F(n)= F'(c_n)= \frac{ \ell \, x^\ell}{c_n^{\ell+1}} \exp\left(- \frac{x^\ell}{c_n^\ell}   \right) $ for some $n<c_n<n+1$.   Substituting this expression and in view of the bound \eqref{T(m,n)},  we can show that
\begin{align}
\big|Z(x^{\ell}) \big|  &  \ll_{\epsilon,  \ell} m^{\frac{1}{2}-k+\epsilon}  \sum_{n=m}^{\infty} \frac{ x^{\ell} }{c_n^{\ell + 1 }} \exp\left(- \frac{x^\ell}{c_n^\ell}   \right) \nonumber  \\
 & \ll_{\epsilon,  \ell}  m^{\frac{1}{2}-k+\epsilon}  \sum_{n=m}^\infty \frac{x^\ell}{n^{\ell + 1}} \nonumber \\
 & \ll_{\epsilon,  \ell}  m^{\frac{1}{2}-k+\epsilon} \frac{x^{\ell}}{m^\ell}  \ll_{\epsilon,  \ell} x^{\frac{1}{2}-k+\epsilon'},  \label{bound_Z}
\end{align}
as $m \sim x^{1-\epsilon}$.   At this moment,  plugging \eqref{bound_Z} in \eqref{Y_2},  we see that
\begin{align}\label{final bound of Y_2}
|Y_2(x^{\ell})| = O_{\epsilon}  \left( x^{\frac{1}{2}-k +\epsilon}  \right).
\end{align}
Finally,  substituting the bounds \eqref{final bound of Y_1} and \eqref{final bound of Y_2} for $Y_1(x^{\ell})$ and $Y_2(x^{\ell})$ in \eqref{P_k(x^2) interms of S_1 and S_2} and comparing,  we can conclude that 
\begin{align*}
P_{k, \ell, \chi}( x^{\ell}) = O_{\epsilon,k,\ell}  \left( x^{\frac{1}{2}-k +\epsilon}  \right).
\end{align*}
At the end, replace $x$ by $x^{\frac{1}{\ell}}$ to obtain \eqref{GM_bound}.  This completes the proof of Theorem \ref{Garg_Maji bound}.

\end{proof}

\section{Concluding remarks} 


The present paper is motivated from the identity \eqref{Hardy-Littlewood} of Hardy and Littlewood.  
In 2012,  Dixit \cite{dixit12} obtained the identity \eqref{odd_character_dixit},  which is  a character analogue of the identity \eqref{Hardy-Littlewood}.  In the present paper,  we have established a one-variable generalization of Dixit's identity,  namely,  Theorem \ref{character_analogue}.  This generalization also provides a character analogue of the identity \eqref{AGM}.  Motivated from the work of Riesz, Hardy and Littlewood,  we have established the following equivalent criteria for the generalized Riemann hypothesis for $L(s,  \chi)$: For $k\geq 1,  \ell >0$,  we have
\begin{equation}\label{Two_variable_GM_bound_Final}
  \sum_{n=1}^{\infty} \frac{\chi(n) \mu(n)}{n^{k}} \exp \left(- \frac{ x}{n^{\ell}}\right) = O_{\epsilon} \bigg(x^{-\frac{k}{\ell}+\frac{1}{2 \ell} + \epsilon }\bigg), \quad \mathrm{as}\,\, x \rightarrow \infty. 
\end{equation}
This bound generalizes the bounds given by Riesz,  and Hardy and Littlewood as well as our previous bound \eqref{AGM_bound}.   The above bound motivates us to predict a more general observation.  Let $L(f,s)$ be a ``nice'' $L$-function for which the grand Riemann hypothesis \cite[p.~113]{IK} predicts that the non-trivial zeros will lie on the critical line $\Re(s)= 1/2$.  Suppose we write $1/L(f,s) = \sum_{n=1}^\infty \frac{\mu_f(n)}{n^s}$,  which is valid in some right half plane.  In \cite[Proposition 5.14]{IK},  one can see that the grand Riemann hypothesis is equivalent to the following bound: For any $\epsilon>0$, 
\begin{align}
\sum_{ 1 \leq n \leq x} \mu_f(x) \ll_{\epsilon, f} x^{1/2 + \epsilon}.  
\end{align}
Note that exactly the same bound in case of the generalized Riemann hypothesis was crucial to obtain the above bound \eqref{Two_variable_GM_bound_Final}.  This motivates us to give the following conjecture. 
\begin{conjecture}
Let $k \geq 1$ and $\ell>0$ be two real numbers.  Let $L(f,s)$ be a ``nice'' $L$-function.  The grand Riemann hypothesis for $L(f,s)$ is equivalent to the bound
\begin{align}
  \sum_{n=1}^{\infty} \frac{\mu_f(n)}{n^{k}} \exp \left(- \frac{ x}{n^{\ell}}\right) = O_{\epsilon,  f, k, \ell} \bigg(x^{-\frac{k}{\ell}+\frac{1}{2 \ell} + \epsilon }\bigg), \quad \mathrm{as}\,\, x \rightarrow \infty. 
\end{align}

\end{conjecture}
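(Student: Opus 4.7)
The plan is to adapt the proof of Theorem \ref{Garg_Maji bound} essentially verbatim to the general setting, replacing $L(s,\chi)$ by $L(f,s)$ and $\chi(n)\mu(n)$ by $\mu_f(n)$ throughout. The starting point is the analogue of Lemma \ref{Mellin_P_two variable}: assuming that $1/L(f,s) = \sum_{n=1}^\infty \mu_f(n)/n^s$ converges absolutely in some right half-plane $\Re(s) > \sigma_0$, the Fubini-type computation from the proof of Lemma \ref{Mellin_P_two variable} gives
\begin{equation*}
\int_0^\infty x^{-s-1} \sum_{n=1}^\infty \frac{\mu_f(n)}{n^k} \exp\left(-\frac{x}{n^\ell}\right) \mathrm{d}x = \frac{\Gamma(-s)}{L(\ell s + k, f)}
\end{equation*}
in a vertical strip of the form $(\sigma_0 - k)/\ell < \Re(s) < 1$, $s \neq 0$.

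For the forward direction, assuming the conjectured bound on the Riesz-type series, I would split the Mellin integral at a large real $R$ as in Theorem \ref{Garg_Maji bound}: the tail becomes analytic in $\Re(s) > 1/(2\ell) - k/\ell$ by the assumed bound, while the finite part is trivially analytic in $\Re(s) < 0$. Since $\Gamma(-s)$ is non-vanishing there, $L(f, w)$ must be non-vanishing for $1/2 < \Re(w) < 1$, and the functional equation built into the definition of a ``nice'' $L$-function propagates non-vanishing to $0 < \Re(w) < 1/2$, yielding GRH. For the converse, I would invoke \cite[Proposition 5.14]{IK}, which identifies GRH for $L(f,s)$ with the bound $S_f(x) := \sum_{n \le x} \mu_f(n) \ll_{\epsilon,f} x^{1/2 + \epsilon}$, and then run the Abel-summation argument of Theorem \ref{Garg_Maji bound} verbatim: set $m = [x^{1-\epsilon}] + 1$, split the series as $Y_1(x^\ell) + Y_2(x^\ell)$, bound $Y_1$ trivially by $m \exp(-x^{\ell\epsilon})$, and bound $Y_2$ by applying Lemma \ref{Euler's summation} with $a_n = \mu_f(n)$ and $f(t) = t^{-k}$, together with the mean-value-theorem estimate on $\exp(-x^\ell/y^\ell)$ used in the passage from \eqref{sum upto N} to \eqref{final bound of Y_2}.

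The main obstacle is not calculational but axiomatic: what I actually need from the word ``nice'' is a concrete package of Selberg-class-type hypotheses on $L(f,s)$, namely (i) an Euler product, so that $\sum \mu_f(n)/n^s$ converges absolutely in some right half-plane and the Mellin computation is legitimate; (ii) analytic continuation to an entire (or nearly entire) function of finite order, to justify the contour manipulations and rule out accidental poles in the critical strip; (iii) a functional equation relating $L(f,s)$ to $L(\tilde f, 1-s)$ under which the critical line is $\Re(s) = 1/2$; and (iv) the equivalence of GRH for $L(f,s)$ with $\sum_{n\le x} \mu_f(n) \ll_{\epsilon,f} x^{1/2+\epsilon}$ supplied by \cite[Proposition 5.14]{IK}. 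Under any axiom system delivering (i)--(iv)---for instance, the Selberg class together with a mild Ramanujan-type growth hypothesis on $\mu_f$---the argument above goes through unchanged, and the conceptual substance of the conjecture reduces to isolating the minimal hypothesis package on $L(f,s)$ under which the equivalence is genuine.
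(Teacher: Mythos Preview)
There is nothing in the paper to compare your proposal against: the statement is presented explicitly as a \emph{conjecture} in the concluding remarks, and the authors offer no proof. Their discussion merely observes that the bound $\sum_{n\le x}\mu_f(n)\ll_{\epsilon,f} x^{1/2+\epsilon}$ from \cite[Proposition 5.14]{IK} was the only arithmetic input needed in the proof of Theorem~\ref{Garg_Maji bound}, and on that basis they \emph{expect} the same equivalence to hold for any ``nice'' $L$-function, without specifying what ``nice'' means.

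Your proposal is therefore not so much a proof as an accurate diagnosis of why the paper leaves this as a conjecture. The calculational skeleton you describe---the Mellin identity of Lemma~\ref{Mellin_P_two variable} with $L(s,\chi)$ replaced by $L(f,s)$, the split at $R$ for the forward direction, and the Abel-summation argument with $m=[x^{1-\epsilon}]+1$ for the converse---is exactly the template of Theorem~\ref{Garg_Maji bound}, and you are right that it transports formally once one has (i)--(iv). You are also right that the entire burden lies in axiomatizing ``nice'' so that (i)--(iv) genuinely hold; the paper does not do this, and until it is done the statement remains a conjecture rather than a theorem. In short, your analysis is correct and coincides with the paper's implicit reasoning, but neither you nor the authors have closed the gap, and for the same reason.
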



{\bf Acknowledgement:} We would like to thank Prof.  Atul Dixit and Dr. Pramod Eyyunni for giving useful suggestions. 
The second author wants to thank SERB for the Start-Up Research Grant SRG/2020/000144.

\end{document}